\DeclareMathAlphabet{\mathpzc}{OT1}{pzc}{m}{it}
\newtheorem{thm}{Theorem}[section]
\newtheorem{lem}[thm]{Lemma}
\newtheorem{prop}[thm]{Proposition}
\newtheorem{cor}[thm]{Corollary}
\newdefinition{defn}[thm]{Definition}
\newdefinition{ex}[thm]{Example}
\newdefinition{rem}[thm]{Remark}
\newdefinition{note}{Note}
\newdefinition{q}{Question}
\newcommand{\comment}[1]{}
\newcommand\m {\mathfrak{m}}
\newcommand{\A}[1]{\mathbb{A}^{#1}}
\newcommand{\ol}[1] {\overline{#1}}
\newcommand{\ul}[1] {{\bf\underline{#1}}}
\newcommand{\nhkrghtarrw}{\hookrightarrow \hspace*{-.50cm}{/}\hspace*{.2cm}}
\begin{document}
\begin{frontmatter}
\title{A Note on Residual Variables of an Affine Fibration}

\author{Prosenjit Das}
\address{Department of Mathematics, Indian Institute of Space Science and Technology, \\
Valiamala P.O., Trivandrum 695 547, India\\
email: \texttt{prosenjit.das@gmail.com}}
\author{Amartya K. Dutta}
\address{Stat-Math Unit, Indian Statistical Institute,\\
203, B.T. Road, Kolkata 700 108, India\\
email: \texttt{amartya@isical.ac.in}}

\begin{abstract}
    In a recent paper \cite{Kahoui_Residual}, M.E. Kahoui has shown that if $R$ is a polynomial ring over $\mathbb C$, $A$ an $\A{3}$-fibration over $R$, and $W$ a residual variable of $A$ then $A$ is stably polynomial over $R[W]$. In this article we show that the above result holds over {\it any} Noetherian domain $R$ provided the module of differentials $\Omega_R(A)$ of the affine fibration $A$ (which is necessarily a projective $A$-module by a theorem of Asanuma) is a stably free $A$-module. 

\noindent
{\scriptsize Keywords: Residual variable; Stably polynomial algebra; $\A{n}$-fibrations, Module of differentials.}\\
{\scriptsize {\bf Mathematics Subject classifications (2010)}: Primary 14R25; Secondary 13B25, 13F20}
\end{abstract}
\end{frontmatter}

\section{Introduction} \label{sec_intro}
Let $R$ denote a Noetherian integral domain. A finitely generated flat $R$-algebra $A$ is said to be an $\A{n}$-fibration over $R$ if $A \otimes_R k(P)$ is a polynomial ring in $n$ variables over $k(P)$ for every prime ideal $P$ of $R$. The concept of residual variable was defined by S.M. Bhatwadekar and A.K. Dutta in \cite{BD_RES} as an element $W$ in the polynomial algebra $R[X,Y]$ for which $R[X,Y] \otimes_R k(P)$ is a polynomial algebra in one variable over $R[W] \otimes_R k(P)$ for every prime ideal $P$ of $R$. The following statement is a part of their main result (\cite{BD_RES}, Theorem 3.1) on residual variables: 

\begin{thm} \label{Thm_BD_Res-1}
Let $R$ be a Noetherain domain and W an element of $R[X,Y]$. Then the following are equivalent:
\begin{enumerate}
\item [\rm (1)]$W$ is a residual variable in $R[X,Y]$.
\item [\rm (2)]$R[X,Y]$ is a stably polynomial algebra over $R[W]$, i.e., $R[X,Y][Z_1, Z_2, \cdots, Z_r]=R[W][T_0, T_1, \cdots, T_r]$ for some indeterminates $Z_i$'s, $T_i$'s over $R$.
\end{enumerate}

\end{thm}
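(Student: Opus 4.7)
For the direction $(2) \Rightarrow (1)$, I would reduce to the fibres over $R$. Suppose $R[X,Y][Z_1,\ldots,Z_r] = R[W][T_0,\ldots,T_r]$; comparing transcendence degrees over $R$ forces $W \notin R$, so $R[W]$ is a genuine polynomial ring and $R[X,Y]$ has transcendence degree one over $R[W]$. Tensoring with $k(P)$ for each prime $P$ of $R$ yields
\[
k(P)[X,Y][Z_1,\ldots,Z_r] = k(P)[W][T_0,\ldots,T_r],
\]
so $k(P)[X,Y]$ is a finitely generated stably polynomial algebra of transcendence degree one over the PID $k(P)[W]$. A cancellation theorem in this setting---for example, Sathaye's triviality of $\A{1}$-fibrations over a DVR combined with Bass--Connell--Wright, or an Abhyankar--Eakin--Heinzer style argument---then gives $k(P)[X,Y] = k(P)[W][F_P]$, establishing the residual-variable condition.

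For the converse $(1) \Rightarrow (2)$, which is the substantive direction, set $A := R[X,Y]$ and $B := R[W]$. My plan is to realise $A$ as an $\A{1}$-fibration over $B$ with free module of differentials, and then to invoke Asanuma's theorem. Writing $A = B[U,V]/(w(U,V) - W)$ where $W = w(X,Y)$, flatness of $A/B$ follows from the principal-ideal flatness criterion: for any prime $Q$ of $B$ with $P := Q \cap R$, the residual-variable hypothesis forces $\bar{w} \in k(P)[U,V]$ to be a genuine coordinate (in particular non-constant), so $\bar{w} - \bar{W}$ remains a non-zero-divisor on the fibre $k(Q)[U,V]$. The fibre computation $A \otimes_B k(Q) = k(P)[X,Y] \otimes_{k(P)[W]} k(Q) = k(Q)[F_P]$ then completes the verification of the $\A{1}$-fibration property. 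By Asanuma's theorem, $\Omega_B(A)$ is projective of rank one over $A$; concretely it is the cokernel of $A \xrightarrow{(W_X,\, W_Y)} A^2$, so its freeness amounts to unimodularity of the row $(W_X, W_Y)$. For this I would use the Jacobian criterion at each fibre together with a prime-by-prime argument on $\mathrm{Spec}(A)$: any prime of $A$ containing both $W_X$ and $W_Y$ would restrict to some $P \in \mathrm{Spec}(R)$, contradicting $(W_X, W_Y) k(P)[X,Y] = k(P)[X,Y]$, which holds because $W$ is a coordinate in the fibre.

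With $\Omega_B(A)$ free of rank one in hand, I would conclude by invoking the version of Asanuma's theorem asserting that an $\A{n}$-fibration with stably free differential module is stably polynomial over the base. The main obstacle I expect is exactly this final step: the passage from a stably free differential module to an explicit stable polynomial presentation is delicate, and since $B = R[W]$ is not assumed regular (as $R$ is only a Noetherian domain), the most refined structural results for $\A{1}$-fibrations are not directly applicable. A viable workaround, which I would attempt, is to exploit the fact that $A = R[X,Y]$ is itself $R$-polynomial to explicitly lift a generator of $\Omega_B(A)$ to an element $F \in A$, and then to produce the stable polynomial identity by an explicit change of variables after adjoining finitely many dummy indeterminates.
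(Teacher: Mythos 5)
The paper itself does not prove Theorem \ref{Thm_BD_Res-1}; it is quoted from Bhatwadekar--Dutta (\cite{BD_RES}, Theorem 3.1). Its substantive direction $(1)\Rightarrow(2)$ is, however, recovered by the paper's own machinery as the special case $A=R[X,Y]$, $\Omega_R(A)\cong A^2$ of Theorem \ref{our_res_cor}, and your plan for that direction is essentially the same strategy: exhibit $R[X,Y]$ as an $\A{1}$-fibration over $R[W]$ with free module of differentials and invoke Asanuma's structure theorem. Two comments on your execution. First, the ``main obstacle'' you anticipate at the end is not one: the final assertion of Theorem \ref{asa_struct-fib-th} requires only that the base be Noetherian, not regular, since $\mbox{Sym}(Q)$ becomes a polynomial ring after adjoining finitely many indeterminates whenever $Q$ is stably free; your proposed explicit-lifting workaround is unnecessary. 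Second, your presentation $A=B[U,V]/(w(U,V)-W)$ with the slicing criterion for flatness and the Jacobian/unimodularity computation of $\Omega_B(A)$ as $\operatorname{coker}(W_X,W_Y)$ is correct but can be bypassed entirely: the paper's route establishes flatness abstractly (Lemma \ref{our_lemma_2} via Lemma \ref{our_lem_res_var}) and obtains $\Omega_{R[W]}(A)\oplus A\cong\Omega_R(A)\cong A^2$ from the split exact sequence of differentials (Lemma \ref{our_lem-Omega}), which already gives stable freeness --- and in fact freeness, by taking second exterior powers --- with no presentation of $A$ over $R[W]$ and no appeal to the Jacobian of a coordinate system of $k(P)[X,Y]$.

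The one genuine soft spot is the cancellation step in $(2)\Rightarrow(1)$. After tensoring with $k(P)$ you have $C^{[r]}=k(P)[\ol{W}]^{[r+1]}$ and you must conclude $C=k(P)[\ol{W}]^{[1]}$ \emph{as $k(P)[\ol{W}]$-algebras}, since that is what the residual-variable condition asserts. An Abhyankar--Eakin--Heinzer style argument only yields an abstract isomorphism of $k(P)$-algebras, which is weaker; and Sathaye's theorem concerns $\A{1}$-fibrations, a structure you have not yet established for $C$ over $k(P)[\ol{W}]$ at that point. The tool that closes this gap cleanly is Hamann's invariance theorem (Theorem \ref{Hamann}), applicable because $k(P)[\ol{W}]$ is seminormal in every characteristic; with that substitution the direction goes through.
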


They also observed (\cite{BD_RES}, Remark 3.4) that an analogous result holds for a system of $m$ algebraically independent elements $W_1,W_2, \cdots, W_m$ in the polynomial ring $R[X_1, X_2, \cdots, X_{m+1}]$. Recently El Kahoui \cite{Kahoui_Residual} has extended the concept of residual variable of a polynomial ring to that of an $\A{n}$-fibration: he defines an element $W$ of an $\A{n}$-fibration $A$ over a ring $R$ to be a residual variable of $A$ over $R$ if $A \otimes_R k(P)$ is a polynomial algebra in $n-1$ variables over $R[W] \otimes_R k(P)$ for each prime ideal $P$ of $R$. He shows that when $n=3$ and the base ring $R$ is a polynomial algebra over $\mathbb C$, then the extended concept satisfies the following analogue of Theorem \ref{Thm_BD_Res-1} (\cite{Kahoui_Residual}, Theorem 3.4):

\begin{thm} \label{Thm_Kahoui_Res-1}
Let $R$ be a finite-dimensional polynomial algebra over $\mathbb C$, $A$ an $\A{3}$-fibration over $R$ and $W$ an element of $A$. Then the following are equivalent:
\begin{enumerate}
\item [\rm (1)]$W$ is a residual variable of $A$ over $R$.
\item [\rm (2)]$A[Z_1, Z_2, \cdots, Z_r] = R[W][T_0, T_1, \cdots, T_r, T_{r+1}]$ for some for some indeterminates $Z_i$'s, $T_i$'s over $R$.
\end{enumerate}

\end{thm}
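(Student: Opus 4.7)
For (1) $\Rightarrow$ (2) (the substantive direction), the plan starts by noting that the image of $W$ modulo any prime $P$ of $R$ is a coordinate in $A\otimes_R k(P)=k(P)^{[3]}$, so $W$ is transcendental over $R$; hence $R[W]=R^{[1]}$ is itself a polynomial algebra over $\mathbb{C}$. Next I would promote $A$ to an $\mathbb{A}^2$-fibration over $R[W]$: flatness follows by checking fibrewise over $R$ (each $A\otimes_R k(P)$ is polynomial, hence free, over $k(P)[W]$), and for a prime $\mathfrak{q}$ of $R[W]$ lying over $P\subseteq R$,
\[A\otimes_{R[W]} k(\mathfrak{q})=k(P)[W]^{[2]}\otimes_{k(P)[W]} k(\mathfrak{q})=k(\mathfrak{q})^{[2]}.\]
Now I would invoke Asanuma's structure theorem for $\mathbb{A}^n$-fibrations: there exist an integer $m\ge 0$ and a finitely generated projective $R[W]$-module $M$ of rank $m+2$ with an $R[W]$-algebra isomorphism $A[Z_1,\ldots,Z_m]\cong\operatorname{Sym}_{R[W]}(M)$. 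Since $R[W]$ is a polynomial ring over $\mathbb{C}$, Quillen-Suslin forces $M$ to be free of rank $m+2$, so $A[Z_1,\ldots,Z_m]\cong R[W][T_0,\ldots,T_{m+1}]$, which is (2).

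For (2) $\Rightarrow$ (1), tensoring with $k(P)$ and writing $B:=A\otimes_R k(P)$, $C:=k(P)[W]$, condition (2) specialises to a $C$-algebra identity $B[Z_1,\ldots,Z_r]=C[T_0,\ldots,T_{r+1}]$. The retract $B\hookrightarrow B[Z_1,\ldots,Z_r]\to B$ sending each $Z_i$ to $0$ exhibits $B$ as a $C$-flat, finitely generated subalgebra; fibrewise $\mathbb{A}^2$-cancellation (Miyanishi-Sugie-Fujita, characteristic zero) then gives $\mathbb{A}^2$-fibres, and the displayed identity forces $\Omega_C(B)$ to be stably free upon specialising $Z_i\mapsto 0$. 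Sathaye's theorem on $\mathbb{A}^2$-fibrations over a DVR—in the form extended by Bhatwadekar and Dutta to one-dimensional Noetherian bases in characteristic zero—then yields $B\cong C[T_1,T_2]$ as $C$-algebras, which is precisely the residual-variable condition on $W$.

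The delicate step is the transition from $R$ to $R[W]$ in the forward direction: here the residual-variable hypothesis and $R$-flatness of $R[W]$ must be woven together through the local criterion for flatness to establish the promotion. The remaining ingredients are used as black boxes: Asanuma's reduction to a symmetric algebra of a projective module, and the Serre conjecture applied to the polynomial ring $R[W]$. The converse direction is not mere cancellation either—polynomial cancellation in three variables remains open—so one genuinely needs the classification of $\mathbb{A}^2$-fibrations over a PID to exclude twisted stably polynomial structures.
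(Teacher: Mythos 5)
The theorem you are proving is stated in the paper only as a quoted result of El Kahoui (\cite{Kahoui_Residual}, Theorem 3.4); the paper gives no proof of it, but its own machinery recovers the substantive direction, and your argument for $(1)\Rightarrow(2)$ is exactly that machinery. The promotion of $A$ to an $\A{2}$-fibration over $R[W]$ is the paper's Corollary \ref{our_res_prop}, whose proof consists of precisely your two steps: the fibrewise identity (Lemma \ref{our_lem_res_var}) and flatness via the local criterion (Lemma \ref{our_lemma_2}). The finish by Asanuma's structure theorem followed by Quillen--Suslin over the polynomial ring $R[W]=\mathbb{C}^{[d+1]}$ is verbatim the proof of Corollary \ref{our_thm_pid} (with the PID taken to be $\mathbb{C}$). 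This direction of your proposal is correct and follows the same route as the paper.

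The direction $(2)\Rightarrow(1)$, which the paper also does not prove (it merely records Kahoui's observation that it holds over any domain containing $\mathbb{Q}$), is where your sketch has a real gap. Specialising to $B^{[r]}=C^{[r+2]}$ with $C=k(P)[W]$ and using Fujita--Miyanishi--Sugie on the fibres of $B$ over $C$ correctly makes $B$ an $\A{2}$-fibration over $C$; but your concluding citation does not deliver what you need. Sathaye's theorem trivialises $\A{2}$-fibrations over a \emph{DVR} containing $\mathbb{Q}$, i.e.\ it yields $B_{\mathfrak{m}}=C_{\mathfrak{m}}^{[2]}$ only locally on $\mathrm{Spec}(C)$; there is no off-the-shelf ``extension to one-dimensional Noetherian bases'' that hands you $B\cong C^{[2]}$ globally in one stroke. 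The missing local-to-global step is genuine content: by Bass--Connell--Wright, a locally polynomial algebra is $\mathrm{Sym}_C(Q)$ for a finitely generated projective $C$-module $Q$ of rank $2$, and $Q$ is free because $C$ is a PID, whence $B=C^{[2]}$. Your intermediate remark that the identity ``forces $\Omega_C(B)$ to be stably free'' is a red herring here: stable freeness of the module of differentials only returns stable polynomiality (which you already have by hypothesis), never $B=C^{[2]}$ on the nose --- that is exactly the distinction the paper draws between Theorem \ref{our_res_cor} and Corollary \ref{our_rem1-res}, and the content of Example \ref{example-1}.
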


Kahoui also observes in \cite{Kahoui_Residual} that if the hypothesis ``$R$ is a polynomial algebra over $\mathbb C$'' is weakened to ``$R$ is any domain containing $\mathbb Q$'', then the conclusion (2) implies (1)  still holds in Theorem \ref{Thm_Kahoui_Res-1}. He remarks (\cite{Kahoui_Residual}, Pg. 39) that it is not known whether the converse also holds. Example \ref{example-1} of our paper shows that the converse does not hold in general
even when R is a regular factorial affine domain. However, using the techniques of Bhatwadekar-Dutta, we shall show that the converse (i.e., $(1) \implies (2)$ of Theorem \ref{Thm_Kahoui_Res-1}) indeed holds over any Noetherian domain when the module of differentials $\Omega_R(A)$ is stably free over $A$. More generally, we shall prove (Corollary \ref{our_res_prop}, Theorem \ref{our_res_cor}, Proposition \ref{our_prop_omega-stable}):\\

\noindent
{\bf Main Theorem.} Let $R$ be a Noetherian domain and $A$ an $\A{n}$-fibration over $R$. Let $B=R[W_1, W_2, \cdots, W_m]$ be a subring of $A$ such that $A \otimes_R k(P)$ is a polynomial algebra in $n-m$ variables over $B \otimes_R k(P)$ for every prime ideal $P$ of $R$. Then $A$ is an $\A{n-m}$-fibration over $B$. Moreover, the following statements are equivalent:

\begin{enumerate}
\item [\rm (1)]$\Omega_R(A)$ is stably free over $A$.
\item [\rm (2)]$A$ is a stably polynomial algebra over $B$.
\end{enumerate}

In \cite{BD_RES}, Bhatwadekar-Dutta also observed the following result (\cite{BD_RES}, Theorem 3.2):

\begin{thm} \label{thm-BD-res-iff-var}
Let $R$ be a Noetherian domain such that either $R$ contains $\mathbb{Q}$ or $R$ is seminormal. Then an element $W$ of $R[X,Y]$ is a residual variable of $R[X,Y]$ over $R$ if and only if $W$ is a variable of $R[X,Y]$.  
\end{thm}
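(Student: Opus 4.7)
The implication that a variable is a residual variable is immediate: if $R[X,Y]=R[W][T]$ for some $T$, then for every prime $P$ of $R$ we have $R[X,Y]\otimes_R k(P)=(R[W]\otimes_R k(P))[T]$, a polynomial algebra in one variable over $R[W]\otimes_R k(P)$.

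For the converse, my plan is as follows. Given a residual variable $W$, apply Theorem \ref{Thm_BD_Res-1} to obtain a stably polynomial presentation
\[ R[X,Y][Z_1,\ldots,Z_r]=R[W][T_0,T_1,\ldots,T_r]. \]
The task then reduces to ``cancelling'' the $Z_i$'s, i.e.\ to showing that the intermediate ring $R[X,Y]$ is itself polynomial in one variable over $R[W]$. The crucial point is that the hypothesis on $R$ is inherited by $R[W]$: clearly $R[W]\supseteq\mathbb{Q}$ if $R\supseteq\mathbb{Q}$, and $R[W]$ is seminormal whenever $R$ is, by the preservation of seminormality under polynomial extensions (Brewer--Costa, Swan). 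I would then invoke a Hamann-type cancellation theorem: over a Noetherian domain $B$ that is seminormal or contains $\mathbb{Q}$, any $B$-algebra $A$ with $A[Z_1,\ldots,Z_r]=B[T_0,\ldots,T_r]$ satisfies $A=B[T]$ for some $T\in A$. Applied with $B=R[W]$ and $A=R[X,Y]$, this yields $R[X,Y]=R[W][T]$, so $W$ is a variable of $R[X,Y]$.

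The main obstacle is the cancellation step. Over a general Noetherian base, stably polynomial $\A{1}$-fibrations need not be polynomial, essentially because of a Picard-group obstruction, so the hypothesis on $R$ is genuinely needed and enters only at this juncture, via Hamann's epimorphism/cancellation theorem in the seminormal case and its $\mathbb{Q}$-analogue in the characteristic-zero case. A secondary technical point---to be verified en route before the cancellation result can be invoked on $B=R[W]$---is the propagation of seminormality from $R$ to $R[W]$, which is classical but must be flagged explicitly.
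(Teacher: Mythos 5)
Your proposal is correct and follows essentially the route the paper itself uses (the paper cites this theorem from Bhatwadekar--Dutta, but proves its generalization, Corollary \ref{our_res_thm}, in exactly this way): pass to a stably polynomial presentation over $R[W]$ and then cancel via Hamann's theorem (Theorem \ref{Hamann}), the hypotheses on $R$ transferring to $R[W]=R^{[1]}$ since seminormality is preserved under polynomial extension. The one point worth noting is that $W$ being a residual variable forces it to be transcendental over $R$, so $R[W]$ really is $R^{[1]}$ and the Brewer--Costa/Swan transfer applies as you indicate.
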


We shall see that Theorem \ref{thm-BD-res-iff-var} holds if we replace the polynomial ring by an $\A{2}$-fibration whose module of differentials is stably free (Corollary \ref{our_res_thm}).

\section{Preliminaries} \label{sec_prelim}
Throughout the article rings will be commutative with unity. For a ring $R$, $R^{[n]}$ will denote the polynomial ring in $n$ variables over $R$. We shall use the notation $A = R^{[n]}$ to mean that $A$ is isomorphic, as an $R$-algebra, to a polynomial ring in $n$ variables over $R$.\\

For a prime ideal $P$ of $R$, $k(P)$ will denote the residue field $R_P/PR_P$. A finitely generated flat $R$-algebra $A$ is said to be an $\A{n}$-fibration over $R$ if $A \otimes_R k(P) = k(P)^{[n]}$ {\it for every prime ideal $P$ of $R$}.\\
Note that several algebraic geometers use the term ``$\A{n}$-fibration'' to mean that $A/\m A \cong (R/\m R)^{[n]}$ 
for almost all maximal ideals $\m$ of $R$. 
We emphasise that we shall use it as defined by Sathaye in \cite{Sat_Pol-two-var-DVR}, where the hypothesis is made on {\it all} fibre rings. 

\smallskip

An $R$-algebra $A$ is said to be stably polynomial algebra over an $R$-subalgebra $B$ of $A$ if there exist indeterminates $Z_1, Z_2, \cdots, Z_r$ over $A$ and indeterminates $T_1, T_2, \cdots, T_s$ over $B$ such that $A[Z_1, Z_2, \cdots, Z_r] = B[T_1, T_2, \cdots, T_s]$ (as $B$-algebras). We state below an elementary observation on stably polynomial algebras.

\begin{lem} \label{our_lem_stably-free}
Let $A$ be a stably polynomial algebra over $R$. Then $\Omega_R(A)$ is a stably free $A$-module.
\end{lem}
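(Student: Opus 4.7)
The plan is to exploit the given stable polynomial equality $A[Z_1,\dots,Z_r] = R[T_1,\dots,T_s]$ by computing the module of K\"ahler differentials of the common ring $C := A[Z_1,\dots,Z_r]$ over $R$ in two different ways, and then to descend from $C$ back down to $A$ by specialising the $Z_i$ to $0$.

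The first computation uses $C = R[T_1,\dots,T_s] = R^{[s]}$: for a polynomial ring the module $\Omega_R(C)$ is free on $dT_1,\dots,dT_s$, hence $\Omega_R(C) \cong C^{s}$. The second computation uses $C = A[Z_1,\dots,Z_r]$: I would invoke the standard second fundamental exact sequence for the tower $R \to A \to C$, namely
\[
0 \longrightarrow \Omega_R(A)\otimes_A C \longrightarrow \Omega_R(C) \longrightarrow \Omega_A(C) \longrightarrow 0,
\]
which is exact on the left and split because the extension $A \hookrightarrow A[Z_1,\dots,Z_r]$ is (formally) smooth; and $\Omega_A(C)$ is free of rank $r$ on $dZ_1,\dots,dZ_r$. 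Combining the two computations gives an isomorphism
\[
\bigl(\Omega_R(A)\otimes_A C\bigr)\oplus C^{r} \;\cong\; C^{s}
\]
of $C$-modules.

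The final step will be to apply the $A$-algebra homomorphism $\varepsilon : C = A[Z_1,\dots,Z_r] \twoheadrightarrow A$ sending each $Z_i \mapsto 0$, i.e., tensor the displayed isomorphism with $-\otimes_{C} A$. The left-hand summand $\Omega_R(A)\otimes_A C$ tensored with $A$ along $\varepsilon$ returns $\Omega_R(A)$, while the free pieces $C^{r}$ and $C^{s}$ become $A^{r}$ and $A^{s}$. This yields
\[
\Omega_R(A)\oplus A^{r} \;\cong\; A^{s}
\]
as $A$-modules, which is exactly the assertion that $\Omega_R(A)$ is stably free over $A$.

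I do not expect any serious obstacle here; everything is bookkeeping around the conormal sequence. The only mildly subtle point is the splitting (equivalently, left-exactness) of the sequence for $R \to A \to A[Z_1,\dots,Z_r]$, which I would justify either by appealing to formal smoothness of polynomial extensions or, even more elementarily, by constructing explicit $R$-linear retractions via the partial derivations $\partial/\partial Z_i$ that pick out a direct $C$-summand generated by $dZ_1,\dots,dZ_r$.
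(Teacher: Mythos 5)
Your proof is correct and is essentially the paper's own argument: both compute $\Omega_R$ of the common ring $A^{[r]}=R^{[s]}$ in two ways to get $(\Omega_R(A)\otimes_A C)\oplus C^{r}\cong C^{s}$, and then descend to $A$ using that $A$ is a retract of $A^{[r]}$ (your $Z_i\mapsto 0$). The only quibble is terminological: the splitting you need is the standard decomposition $\Omega_R(A[\underline{Z}])\cong(\Omega_R(A)\otimes_A A[\underline{Z}])\oplus\Omega_A(A[\underline{Z}])$ for a polynomial extension (the relative cotangent sequence, not the ``second fundamental'' conormal sequence), which is exactly the fact the paper cites from Matsumura.
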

\begin{proof}
Set $D:= A^{[m]} = R^{[n+m]}$ for some $m,n$ and $M:= \Omega_R(A) \oplus A^m$. Then $M \otimes_A D \cong (\Omega_R(A) \otimes_A D) \oplus D^m \cong \Omega_R(D) \cong D^{n+m}$ (cf. \cite{Matsumura_Algebra}, Example 26.J, Pg. 189). Thus $M \otimes_A D$ is a free $D$-module. Since $A$ is a retract of $D = A^{[m]}$, it follows that $M$ is a free $A$-module. Thus $\Omega_R(A)$ is stably free over $A$.
\end{proof}
The following structure theorem on affine fibrations is due to T. Asanuma (\cite{Asanuma_fibre_ring}, Theorem 3.4):
\begin{thm} \label{asa_struct-fib-th}
Let $R$ be a Noetherian ring and $A$ an $\A{r}$-fibration over $R$. 
Then $\Omega_R(A)$ is a projective $A$-module of rank $r$ and $A$ is an $R$-subalgebra 
(up to an isomorphism) of a polynomial ring $R^{[m]}$ for some $m$ such that 
$A^{[m]} = \mbox{Sym}_{R^{[m]}} (\Omega_R(A) \otimes_A R^{[m]})$ (as $R$-algebras). 
In particular, if $\Omega_R(A)$ is a stably free $A$-module, then $A$ is a stably polynomial algebra over $R$.
\end{thm}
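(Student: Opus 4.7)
The main assertion is Asanuma's structure theorem for affine fibrations (Theorem 3.4 of the cited paper), so my proposal is to quote the first half directly from the reference and concentrate on the ``in particular'' clause. Thus I would start by assuming the two conclusions already established by Asanuma: $\Omega_R(A)$ is a projective $A$-module of rank $r$, and there is an $R$-algebra isomorphism
\[
A^{[m]} \;\cong\; \mathrm{Sym}_{R^{[m]}}\!\bigl(\Omega_R(A)\otimes_A R^{[m]}\bigr),
\]
which in particular equips $A^{[m]}$ with a natural $R^{[m]}$-algebra structure coming from the right-hand side.

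Assuming in addition that $\Omega_R(A)$ is stably free over $A$, I would pick $t$ with $\Omega_R(A)\oplus A^{t} \cong A^{r+t}$ and base-change this decomposition along $A\to R^{[m]}$ to obtain
\[
\bigl(\Omega_R(A)\otimes_A R^{[m]}\bigr)\oplus (R^{[m]})^{t} \;\cong\; (R^{[m]})^{r+t}
\]
as $R^{[m]}$-modules. Applying $\mathrm{Sym}_{R^{[m]}}(-)$, using the standard splitting $\mathrm{Sym}(M\oplus N)\cong\mathrm{Sym}(M)\otimes\mathrm{Sym}(N)$, and identifying the symmetric algebra of a free module of rank $k$ with a polynomial algebra in $k$ variables, the right-hand side becomes $R^{[m+r+t]}$, while the left-hand side is
\[
\mathrm{Sym}_{R^{[m]}}\!\bigl(\Omega_R(A)\otimes_A R^{[m]}\bigr)\otimes_{R^{[m]}} R^{[m+t]} \;\cong\; A^{[m]}\otimes_{R^{[m]}} R^{[m+t]} \;=\; A^{[m+t]}.
\]
Comparing the two sides yields $A^{[m+t]}\cong R^{[m+r+t]}$, which is precisely the assertion that $A$ is stably polynomial over $R$.

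There is essentially no obstacle beyond careful bookkeeping of these symmetric-algebra identifications, since the entire non-trivial content has been absorbed into Asanuma's main assertion. The only point deserving a moment of attention is that the $R^{[m]}$-algebra structure on $A^{[m]}$ used in the last display is the one transported from the symmetric algebra via Asanuma's isomorphism, so that adjoining $t$ polynomial variables over $R^{[m]}$ genuinely reproduces $A^{[m+t]}$; once this compatibility is recorded, the derivation of the ``in particular'' clause is purely formal.
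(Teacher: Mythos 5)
Your proposal is correct and matches the paper's treatment: the paper states this result as a citation of Asanuma (\cite{Asanuma_fibre_ring}, Theorem 3.4) without proof, and the ``in particular'' clause is exactly the routine symmetric-algebra bookkeeping you carry out (base-changing the stably free splitting along $A\hookrightarrow R^{[m]}$ and using $\mathrm{Sym}(M\oplus N)\cong \mathrm{Sym}(M)\otimes\mathrm{Sym}(N)$). Nothing further is needed.
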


We shall use the following result by Quillen-Suslin (\cite{Quillen_proj}, \cite{Suslin_proj-free})
\begin{thm} \label{Quillen_Suslin}
If $R$ is a PID, then any finitely generated projective $R^{[n]}$-module is free.
\end{thm}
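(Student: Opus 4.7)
The plan is to proceed by induction on $n$, combining Quillen's patching theorem with the local Horrocks theorem. The base case $n=0$ is the classical structure theorem for finitely generated modules over a principal ideal domain: any finitely generated projective $R$-module is torsion-free and finitely generated, hence free.

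For the inductive step, assume the statement for $R^{[n-1]}$ and set $S := R^{[n-1]}$, so that $R^{[n]} = S[T]$. Let $P$ be a finitely generated projective $S[T]$-module. I would invoke Quillen's Patching Theorem, which asserts that the set $\{s \in S : P_s \text{ is extended from } S_s\}$ is an ideal of $S$; consequently, if $P_{\mathfrak{m}}$ is extended from $S_{\mathfrak{m}}$ for every maximal ideal $\mathfrak{m}$ of $S$, then there is a finitely generated projective $S$-module $P_0$ with $P \cong P_0 \otimes_S S[T]$. Given such a $P_0$, the induction hypothesis applied to $P_0$ (viewed as a finitely generated projective $R^{[n-1]}$-module) forces $P_0$ to be free, and therefore $P$ is free over $S[T] = R^{[n]}$, as required.

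The remaining step, which is the main obstacle, is to verify the local hypothesis of Quillen's patching, namely that for every maximal ideal $\mathfrak{m}$ of $S$ the finitely generated projective $S_{\mathfrak{m}}[T]$-module $P_{\mathfrak{m}}$ is extended from the Noetherian local ring $S_{\mathfrak{m}}$. This is the content of the Horrocks--Suslin local theorem, which in fact strengthens extendedness to freeness: every finitely generated projective module over the polynomial ring over a (Noetherian) local ring is free. To establish it I would follow Suslin's unimodular-row approach: show that any unimodular row $(v_1(T), \ldots, v_r(T))$ over $S_{\mathfrak{m}}[T]$ whose first entry is monic in $T$ can be completed to an invertible matrix, then deduce freeness of projective modules by induction on their rank after a preparation step that replaces an arbitrary generator by one monic in $T$. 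Alternatively, Horrocks' original geometric argument with vector bundles on the projective line over $S_{\mathfrak{m}}$ would do. Either route provides the local input, and the induction closes to yield the theorem.
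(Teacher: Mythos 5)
First, a remark on the comparison itself: the paper does not prove this statement at all --- it is quoted as the classical Quillen--Suslin theorem, with citations to the original papers of Quillen and Suslin. So your proposal is being measured against the standard arguments in the literature rather than against a proof in the text. Your overall strategy (induction on $n$, Quillen's patching theorem, a local Horrocks-type input) is indeed the route those sources take, but as written your outline has a genuine gap at the decisive step.

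The problem is the assertion you call the ``Horrocks--Suslin local theorem,'' namely that every finitely generated projective module over $A[T]$ is free whenever $A$ is a Noetherian local ring. This is false in general: if $A$ is a one-dimensional Noetherian local domain that is not seminormal (e.g.\ $A=k[[t^2,t^3]]$), then $\mathrm{Pic}(A[T])\neq \mathrm{Pic}(A)=0$ by the Bass--Murthy/Traverso criterion, so $A[T]$ carries non-free rank-one projectives. Even restricted to the regular local rings $S_{\mathfrak{m}}$ that actually arise in your induction, the statement ``finitely generated projectives over $S_{\mathfrak{m}}[T]$ are free'' is the one-variable case of the Bass--Quillen problem, a result deeper than the theorem being proved and not the input used by Quillen or Suslin. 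The correct local statement is Horrocks' theorem, which carries an extra hypothesis: for $A$ local and $P$ finitely generated projective over $A[T]$, one may conclude $P$ is free \emph{provided} $P_f$ is free for some $f$ monic in $T$. Producing such a monic $f$ is the real content of the proof and is entirely absent from your outline: one first treats the field case, where $P\otimes k(x_n)[x_1,\dots,x_{n-1}]$ is free by the inductive hypothesis applied over the field $k(x_n)$, so that $P_g$ is free for some nonzero $g\in k[x_n]$, which is monic in $x_n$ up to a unit of $k$; the PID case then needs a further descent from $K=\mathrm{Frac}(R)$ (or Suslin's monic-polynomial and unimodular-row machinery, which you gesture at but whose hypotheses again involve monicity that must be arranged, not assumed). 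Without this step the local hypothesis of Quillen patching is never verified and the induction does not close.
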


We record the following result on cancellation by Hamann (\cite{Haman_Invariance}, Theorem 2.6 and Theorem 2.8).
\begin{thm} \label{Hamann}
Let $R$ be a Noetherian ring such that either $R$ contains $\mathbb{Q}$ or $R_{red}$ is seminormal. Then $R^{[1]}$ is $R$-invariant, i.e., if an $R$-algebra $A$ is such that $A^{[m]} = R^{[m+1]}$, then $A = R^{[1]}$.
\end{thm}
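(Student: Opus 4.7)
The plan is to deduce from the stable isomorphism $A^{[m]} = R^{[m+1]}$ that $A$ is an $\A{1}$-fibration over $R$, and then to use the hypothesis on $R$ to upgrade this $\A{1}$-fibration structure to a genuine polynomial ring structure.

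First I would verify that $A$ is an $\A{1}$-fibration over $R$. The isomorphism $A^{[m]} = R^{[m+1]}$ exhibits $A$ as an $R$-subalgebra of a polynomial ring which is moreover an $R$-algebra retract of $A^{[m]}$ via the specialisation $X_1 = \cdots = X_m = 0$, so $A$ is finitely generated and faithfully flat over $R$. For every prime $\mathfrak p$ of $R$, tensoring with $k(\mathfrak p)$ gives $(A \otimes_R k(\mathfrak p))^{[m]} = k(\mathfrak p)^{[m+1]}$, and the Abhyankar-Eakin-Heinzer cancellation theorem over the field $k(\mathfrak p)$ then forces $A \otimes_R k(\mathfrak p) = k(\mathfrak p)^{[1]}$.

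Second, I would lift a fibre-generator to a global variable. By Theorem \ref{asa_struct-fib-th}, $\Omega_R(A)$ is a rank-$1$ projective $A$-module, and by Lemma \ref{our_lem_stably-free} it is stably free. When $R \supseteq \mathbb Q$, I would construct a non-zero locally nilpotent $R$-derivation $D$ on $A$ by descending a partial $\partial/\partial Y_i$ on $R[Y_0, \ldots, Y_m]$ through the associated $\mathbb G_a$-action and averaging (the Reynolds operator being available because $\mathbb Q \subseteq R$), and then use a slice $s \in A$ with $D(s) = 1$ to identify $A = R[s]$. When $R_{\mathrm{red}}$ is seminormal, a Traverso-Swan argument gives $\mathrm{Pic}(R_{\mathrm{red}}^{[1]}) = \mathrm{Pic}(R_{\mathrm{red}})$; combined with the stable freeness of $\Omega_R(A)$ this forces $\Omega_R(A)$ to be a free $A$-module, say $\Omega_R(A) = A\, df$. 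A faithful flatness argument then shows $R[f] = A$, since $R[f] \hookrightarrow A$ is an inclusion of $\A{1}$-fibrations over $R$ that is an isomorphism on every fibre.

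The main obstacle is the construction of the variable in the second step. Without the hypothesis on $R$, an $\A{1}$-fibration need not be Zariski-locally trivial --- Asanuma's famous threefolds over non-seminormal bases of positive characteristic provide explicit counterexamples --- so the stably free module $\Omega_R(A)$ could in principle fail to furnish an actual polynomial generator. The hypothesis on $R$ enters precisely to provide the mechanism (the exponential of a locally nilpotent derivation in characteristic zero, or Traverso-Swan Picard-group descent in the seminormal case) that overcomes this obstruction and produces the required variable.
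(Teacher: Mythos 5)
First, note that the paper offers no proof of this statement at all: it is quoted verbatim as a known theorem of Hamann (\cite{Haman_Invariance}, Theorems 2.6 and 2.8), so you are attempting to reprove from scratch a deep result that the authors only cite. Your first step is fine: the retract argument shows $A$ is finitely generated and $R$-flat, and Abhyankar--Eakin--Heinzer cancellation over each residue field makes $A$ an $\A{1}$-fibration over $R$, with $\Omega_R(A)$ projective of rank one and stably free (hence actually free, by taking top exterior powers --- no seminormality or Traverso--Swan input is needed for that). But all of the genuine content of Hamann's theorem lies in your second step, and that is where the argument breaks down.

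In the characteristic-zero case, the proposed construction of a locally nilpotent $R$-derivation by ``descending $\partial/\partial Y_i$ and averaging via a Reynolds operator'' does not work: there is no reason the derivation $\partial/\partial Y_i$ of $A[X_1,\dots,X_m]=R[Y_0,\dots,Y_m]$ preserves the subring $A$, and $\mathbb{G}_a$ is unipotent, not linearly reductive, so no Reynolds operator exists even over $\mathbb{Q}$. In the seminormal case, knowing $\Omega_R(A)\cong A$ does not produce an element $f$ with $\Omega_R(A)=A\,df$: the ideal generated by the coefficients $u_a$ (where $da=u_a\omega$) is the unit ideal, but no single $u_a$ need be a unit. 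And even granting such an $f$, the claim that $R[f]\hookrightarrow A$ is an isomorphism on every fibre is false in positive characteristic: over $k=\mathbb{F}_p$ the element $f=t+t^p$ of $k[t]$ has $df$ generating $\Omega_k(k[t])$, yet $k[f]\subsetneq k[t]$ is a proper (degree-$p$ integral) subring. Since Hamann's own counterexamples show the theorem genuinely fails without the hypotheses, any correct proof must exploit them in an essential and precise way (Hamann's argument proceeds through a careful analysis of conductors and seminormality), and your invocations of the hypotheses are too loose to do this. The honest course here is to cite Hamann, as the paper does.
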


The following result was first proved by Kambayashi-Miyanishi in (\cite{Kam-Miya_flat-fibration}, Theorem 1). Since any rank-one projective module over a factorial domain is free, this result can now also be seen to follow from Theorem \ref{asa_struct-fib-th} and Theorem \ref{Hamann}. A more general version of the result is given in (\cite{D_MOR}, Theorem 3.4).

\begin{thm} \label{Kam-Miya}
Let $R$ be a Noetherian factorial domain and
$A$ an $\A{1}$-fibration over R. Then $A=R^{[1]}$.
\end{thm}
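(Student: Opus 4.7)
The plan is to use Asanuma's structure theorem (Theorem \ref{asa_struct-fib-th}) to reduce the assertion to stable polynomiality, and then conclude via Hamann's invariance theorem (Theorem \ref{Hamann}). Specifically, applying Theorem \ref{asa_struct-fib-th} to the $\A{1}$-fibration $A$ over $R$ produces a rank-one projective $A$-module $\Omega_R(A)$ together with an $R$-algebra embedding $A \hookrightarrow R^{[m]}$ for some $m$ such that
\[
A^{[m]} \;=\; \mathrm{Sym}_{R^{[m]}}\!\bigl(\Omega_R(A) \otimes_A R^{[m]}\bigr).
\]
The right-hand side is the symmetric algebra of the rank-one projective $R^{[m]}$-module $P := \Omega_R(A) \otimes_A R^{[m]}$.

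The crucial step is to show that $P$ is free. Since $R$ is factorial, iterated application of Gauss's lemma shows $R^{[m]}$ is factorial as well; in a factorial domain the divisor class group vanishes, so every rank-one projective module is free. Thus $P \cong R^{[m]}$, and substitution gives $A^{[m]} \cong R^{[m]}[T] = R^{[m+1]}$, i.e., $A$ is stably polynomial over $R$. Because a factorial domain is integrally closed and therefore seminormal, Theorem \ref{Hamann} applies to yield $A = R^{[1]}$.

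The delicate point I anticipate is the freeness of $P$: one cannot directly claim that rank-one projectives over $A$ itself are free, because $A$ is not a priori known to be factorial (indeed, this is essentially a consequence of the very theorem we are proving). The role of Asanuma's embedding is precisely to transport the rank-one projective $\Omega_R(A)$ from $A$ to the polynomial extension $R^{[m]}$, where factoriality is inherited from $R$ by Gauss and the Picard group is immediately trivial. Once this transport is made, the remainder of the proof is a mechanical assembly of the two cited theorems.
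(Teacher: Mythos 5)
Your proof is correct and takes essentially the same route as the paper, which derives this result from Theorem \ref{asa_struct-fib-th} and Theorem \ref{Hamann} via the observation that a rank-one projective module over a factorial domain is free. Your write-up is a faithful expansion of that one-line sketch, and you correctly pinpoint the one subtlety: the factoriality (hence triviality of the Picard group) must be invoked for $R^{[m]}$, not for $A$.
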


\section {Main Theorem}
\begin{defn}
Let $R$ be a ring, $A$ an $R$-algebra, $n \in \mathbb{N}$ and $\ul{W}:=(W_1, W_2, \cdots, W_m )$ an $m$-tuple of elements in $A$ which are algebraically independent over $R$ such that $A \otimes_R k(P) = (R[\ul{W}] \otimes_R k(P))^{[n-m]}$ for all $P \in$ Spec($R$). We shall call such an $m$-tuple $\ul{W}$ to be an $m$-tuple residual variable of $A$ over $R$.
\end{defn}

\begin{rem}
(\cite{BD_RES}, Example 4.1) provides an example of a residual variable in a polynomial ring which is not a variable. For an example of a residual variable in an affine fibration which is not a polynomial ring,
see Example \ref{example-1} or Remark \ref{example-2}.
\end{rem} 

We first observe an elementary result.

\begin{lem}\label{our_lem_res_var}
Let $R$ be a ring, $A$ an $R$-algebra, $B_1, B_2$ $R$-subalgebras of $A$ and $B=B_1 \otimes_R B_2$. Suppose that $A \otimes_R k(P) = B \otimes_R k(P)^{[n]}$ for all $P \in $ Spec($R$). Then $A \otimes_{B_2} k(Q) = B \otimes_{B_2} k(Q)^{[n]}$ for all $Q \in $ Spec($B_2$).
\end{lem}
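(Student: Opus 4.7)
The plan is to reduce the computation of the fibre $A \otimes_{B_2} k(Q)$ to the hypothesised fibre $A \otimes_R k(P)$ via a tensor-product factorisation, and then substitute the polynomial-ring structure.

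First, for any $Q \in \mathrm{Spec}(B_2)$, set $P := Q \cap R$. Since the inclusion $R \hookrightarrow B_2$ sends $R \setminus P$ into $B_2 \setminus Q$ and $P$ into $Q$, it induces a natural map of residue fields $k(P) \to k(Q)$. In particular, the canonical homomorphism $B_2 \to k(Q)$ factors through $B_2 \otimes_R k(P)$: the image of $Q$ in $B_2 \otimes_R k(P)$ is a prime ideal with residue field exactly $k(Q)$.

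Next, using associativity of tensor products together with the identity $A \otimes_R k(P) = A \otimes_{B_2} (B_2 \otimes_R k(P))$, one obtains the key isomorphism
\[
A \otimes_{B_2} k(Q) \;\cong\; (A \otimes_R k(P)) \otimes_{B_2 \otimes_R k(P)} k(Q).
\]
Applying the same manipulation to $B = B_1 \otimes_R B_2$ yields
\[
B \otimes_{B_2} k(Q) \;=\; B_1 \otimes_R k(Q) \;=\; (B \otimes_R k(P)) \otimes_{B_2 \otimes_R k(P)} k(Q).
\]

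Finally, substituting the hypothesis $A \otimes_R k(P) = (B \otimes_R k(P))[X_1, \ldots, X_n]$ into the first isomorphism and using the fact that base change commutes with forming polynomial rings, one gets
\[
A \otimes_{B_2} k(Q) \;=\; \bigl((B \otimes_R k(P)) \otimes_{B_2 \otimes_R k(P)} k(Q)\bigr)[X_1, \ldots, X_n] \;=\; (B \otimes_{B_2} k(Q))^{[n]},
\]
as desired. The argument presents no genuine obstacle; it is essentially bookkeeping of tensor products, the one delicate point being the factorisation of $B_2 \to k(Q)$ through $B_2 \otimes_R k(P)$, which is immediate from $P = Q \cap R$.
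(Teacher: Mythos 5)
Your proposal is correct and follows essentially the same route as the paper: both reduce $A \otimes_{B_2} k(Q)$ to $(A \otimes_R k(P)) \otimes_{B_2 \otimes_R k(P)} k(Q)$ with $P = Q \cap R$ and then substitute the polynomial-ring hypothesis. Your explicit justification that $B_2 \to k(Q)$ factors through $B_2 \otimes_R k(P)$ is a point the paper leaves implicit, but the argument is the same.
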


\begin{proof}
Fix $Q \in$ Spec($B_2$) and set $P := Q \cap R \in$ Spec($R$). Set $\ol{B}_2:=B_2 \otimes_R k(P)$. Then $B \otimes_{B_2} \ol{B}_2 = B \otimes_R k(P)$ and $A \otimes_{B_2} \ol{B}_2 = A \otimes_R k(P)$ so that

\smallskip

$A \otimes_{B_2} k(Q)$ 
   = $A \otimes_{B_2} \ol{B}_2 \otimes_{\ol{B}_2} k(Q)$
   = $A \otimes_{R} k(P) \otimes_{\ol{B}_2} k(Q)$
   = $(B \otimes_{R} k(P) \otimes_{\ol{B}_2} k(Q))^{[n]}$
   = $(B \otimes_{B_2} \ol{B}_2 \otimes_{\ol{B}_2} k(Q))^{[n]}$
   = $(B \otimes_{B_2} k(Q))^{[n]}$.
\end{proof}

As a consequence, we have
\begin{rem}
If $(\ul{U}, \ul{V}):=(U_1, U_2, \cdots, U_s, V_1, V_2, \cdots, V_t)$ is an $(s+t)$-tuple residual variable of $A$ over $R$, then $\ul{V}$ is a $t$-tuple residual variable over $R[\ul{U}]$.
\end{rem}

Next we record a result on flatness.
\begin{lem} \label{our_lemma_2}
Let $R \subset  B \subset A$ be Noetherian rings such that
\begin{enumerate}
\item [\rm (i)] $A$ and $B$ are flat over $R$.
\item [\rm (ii)]$A \otimes_R k(P)$ is flat over  $B\otimes_R k(P)$ for all $P \in$ Spec($R$).
\end{enumerate}

Then $A$ is flat over $B$.
\end{lem}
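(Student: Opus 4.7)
The approach is to reduce the flatness of $A$ over $B$ to a local problem and apply the local criterion of flatness at each prime of $A$. Since $A$ is $B$-flat if and only if $A_{\mathfrak{P}}$ is $B_{\mathfrak{q}}$-flat for every $\mathfrak{P}\in\mathrm{Spec}(A)$ (with $\mathfrak{q}:=\mathfrak{P}\cap B$), fix such a $\mathfrak{P}$ and set $P:=\mathfrak{P}\cap R$. The induced map $B_{\mathfrak{q}}\to A_{\mathfrak{P}}$ is a local homomorphism of Noetherian local rings, so the ``ideally separated'' hypothesis in the local criterion of flatness (cf.\ \cite{Matsumura_Algebra}) is automatic for the ideal $I:=PB_{\mathfrak{q}}$ of $B_{\mathfrak{q}}$. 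The criterion thus reduces the problem to verifying: (a) $A_{\mathfrak{P}}/PA_{\mathfrak{P}}$ is flat over $B_{\mathfrak{q}}/PB_{\mathfrak{q}}$; and (b) $\mathrm{Tor}_1^{B_{\mathfrak{q}}}(A_{\mathfrak{P}},\,B_{\mathfrak{q}}/PB_{\mathfrak{q}})=0$.

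Condition (a) is immediate from hypothesis (ii): $A_{\mathfrak{P}}/PA_{\mathfrak{P}}$ and $B_{\mathfrak{q}}/PB_{\mathfrak{q}}$ are further localizations of the rings $A\otimes_R k(P)$ and $B\otimes_R k(P)$ (at $A\setminus\mathfrak{P}$ and $B\setminus\mathfrak{q}$, respectively), so fibrewise flatness passes to them. For (b), I would tensor the short exact sequence $0\to P\to R\to R/P\to 0$ with $B$ over $R$, using the $R$-flatness of $B$ from hypothesis (i), to obtain the injection $PB\hookrightarrow B$ together with the identification $PB\cong P\otimes_R B$. Tensoring $0\to PB\to B\to B/PB\to 0$ with $A_{\mathfrak{P}}$ over $B_{\mathfrak{q}}$ and invoking associativity of tensor products gives $A_{\mathfrak{P}}\otimes_{B_{\mathfrak{q}}}PB_{\mathfrak{q}}\cong P\otimes_{R_P}A_{\mathfrak{P}}$; the $R$-flatness of $A$ then identifies this with $PA_{\mathfrak{P}}$. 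The resulting natural map $A_{\mathfrak{P}}\otimes_{B_{\mathfrak{q}}}PB_{\mathfrak{q}}\to A_{\mathfrak{P}}$ is therefore the inclusion $PA_{\mathfrak{P}}\hookrightarrow A_{\mathfrak{P}}$, which is injective, so (b) holds.

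The main potential obstacle is the careful bookkeeping with localizations and associativity of tensor products needed to match the abstract local criterion of flatness to the concrete statements at hand; conceptually, however, nothing is needed beyond the classical fibre-by-fibre flatness reduction, with hypothesis (i) handling the Tor vanishing and hypothesis (ii) handling the fibrewise flatness.
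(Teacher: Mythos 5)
Your proof is correct and takes essentially the same approach as the paper: both reduce to the local homomorphisms $R_P \to B_{\mathfrak{q}} \to A_{\mathfrak{P}}$ at a prime of $A$ and apply the fibrewise criterion of flatness there. The only difference is presentational --- the paper cites that criterion as a black box (\cite{Matsumura_Algebra}, 20.G) and devotes its effort to the localization bookkeeping for your condition (a), whereas you additionally re-derive the criterion from the local criterion of flatness via the Tor computation in (b), using the $R$-flatness of $A$ and $B$ exactly where the cited theorem does.
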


\begin{proof}
We shall show that $A_{Q}$ is flat over $B_{Q \cap B}$ for all $Q \in$ Spec($A$). Fix $Q \in$ Spec($A$) and set $P':= Q \cap B \in$ Spec($B$) and $P = P' \cap R \in$ Spec($R$). Then we have local homomorphisms $R_P \longrightarrow B_{P'} \longrightarrow A_Q$. As $B_{P'}$ is flat over $R_P$, to show that $A_Q$ is flat over $B_{P'}$, it is enough to show that $A_Q \otimes_{R_P} k(P)$ is flat over $B_{P'} \otimes_{R_P} k(P)$ (cf. \cite{Matsumura_Algebra}, 20.G, Pg. 152).
 
 \medskip
 
 Since $A \otimes_R k(P)$ is flat over $B \otimes_R k(P)$ and $Q \cap R = P$, we see that $A_Q \otimes_{R_P} k(P)$ is flat over $B \otimes_R k(P)$ and hence  $(A_Q \otimes_{R_P} k(P)) \otimes_{B} B_{P'}$ is flat over $(B \otimes_R k(P)) \otimes_{B} B_{P'}$. Now 
 
 \medskip
 
  $(A_Q \otimes_{R_P} k(P)) \otimes_{B} B_{P'}$
  = $(A_Q  \otimes_{B} B_{P'}) \otimes_{R_P} k(P)$
  = $(A_Q  \otimes_{B_{P'}} B_{P'}) \otimes_{R_P} k(P)$
  = $A_Q \otimes_{R_P} k(P)$
  
  \smallskip
  
   and
   
  \smallskip
  
  $(B \otimes_R k(P)) \otimes_{B} B_{P'}$
  = $B_{P'}\otimes_R k(P)$
  = $B_{P'}\otimes_{R_P} k(P)$.
  
  \medskip
  
  This shows that $A_Q \otimes_{R_P} k(P)$ is flat over $B_{P'} \otimes_{R_P} k(P)$ and hence $A_Q$ is a flat $B_{P'}$-algebra. Thus $A$ is a flat $B$-algebra.
\end{proof}
From Lemma \ref{our_lem_res_var} and Lemma \ref{our_lemma_2} it follows that if $\ul{W}$ is an $m$-tuple residual variable of an $\A{n}$-fibration $A$ over a ring $R$, then $A$ is an $\A{n-m}$-fibration over $R[\ul{W}]$. More generally, we have:

\begin{cor} \label{our_res_prop}
Let $R \subset B \subset A$ be Noetherian rings such that $A$ is an $\A{n}$-fibration over $R$ and
$B$ an $\A{m}$-fibration over $R$ with $A \otimes_R k(P) = {B \otimes_R k(P)}^{[n-m]} $ for all $P \in$ Spec($R$). Then $A$ is an $\A{n-m}$-fibration over $B$.
\end{cor}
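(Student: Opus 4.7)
The plan is to verify the three defining conditions of an $\A{n-m}$-fibration for $A$ over $B$: (i) $A$ is finitely generated over $B$, (ii) $A$ is flat over $B$, and (iii) $A\otimes_B k(Q) = k(Q)^{[n-m]}$ for every $Q\in\mathrm{Spec}(B)$. Finite generation is immediate: since $A$ is finitely generated as an $R$-algebra and $R\subseteq B$, the same finite set of generators serves over $B$.

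For flatness I would invoke Lemma \ref{our_lemma_2} applied to $R\subset B\subset A$. The first hypothesis of that lemma holds because $A$ and $B$ are affine fibrations over $R$ and hence $R$-flat. For the second hypothesis, fix any $P\in\mathrm{Spec}(R)$; then by assumption $A\otimes_R k(P) = \bigl(B\otimes_R k(P)\bigr)^{[n-m]}$, which is a polynomial ring and in particular flat over $B\otimes_R k(P)$. Lemma \ref{our_lemma_2} therefore yields that $A$ is flat over $B$.

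For the fiber computation, given $Q\in\mathrm{Spec}(B)$, set $P:=Q\cap R\in\mathrm{Spec}(R)$. Since $P\subseteq Q$, the structure map $B\to k(Q)$ factors naturally through $B\otimes_R k(P)$; hence, by the standard base-change identification,
\[
A\otimes_B k(Q) \;=\; A\otimes_B\bigl(B\otimes_R k(P)\bigr)\otimes_{B\otimes_R k(P)} k(Q) \;=\; \bigl(A\otimes_R k(P)\bigr)\otimes_{B\otimes_R k(P)} k(Q).
\]
Substituting the hypothesis $A\otimes_R k(P) = (B\otimes_R k(P))^{[n-m]}$ and extending polynomial variables along the map $B\otimes_R k(P)\to k(Q)$ gives $A\otimes_B k(Q) = k(Q)^{[n-m]}$, as required. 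This is essentially the computation already carried out in the proof of Lemma \ref{our_lem_res_var}, simply reinterpreted in the present setting where $B$ is not assumed to be a tensor product but the intermediate role is instead played by the subring $B$ itself.

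There is no serious obstacle here; the only subtle point is the verification that one really does have the base-change identity $A\otimes_B k(Q) \cong (A\otimes_R k(P))\otimes_{B\otimes_R k(P)} k(Q)$, which rests on the containment $PB\subseteq Q$ so that the homomorphism $B\to k(Q)$ is $k(P)$-linear. Once this is laid out, the three fibration axioms follow immediately from the given data and Lemmas \ref{our_lem_res_var} and \ref{our_lemma_2}.
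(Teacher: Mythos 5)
Your proof is correct and follows essentially the same route as the paper, which deduces the corollary from Lemma \ref{our_lemma_2} (for flatness) together with the base-change computation of Lemma \ref{our_lem_res_var} (applied with $B_1=R$ and $B_2=B$) for the fibres. Your explicit verification of finite generation and of the identity $A\otimes_B k(Q)\cong (A\otimes_R k(P))\otimes_{B\otimes_R k(P)}k(Q)$ just spells out what the paper leaves implicit.
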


\begin{rem} \label{our_res_iff_A2-fib}
Let $A$ be an $\A{3}$-fibration over $R$ and $W \in A$. It was shown in (\cite{Kahoui_Residual}, Proposition 3.2) that if $\mathbb{Q} \hookrightarrow R$ and $A$ an $\A{2}$-fibration over $R[W]$, then $W$ is a residual variable of $A$ over $R$. The converse was also proved for the case $R$ is a regular affine domain over $\mathbb{C}$ (\cite{Kahoui_Residual}, Theorem 3.3). Corollary \ref{our_res_prop} shows that the converse holds for {\it any} Noetherian domain.
\end{rem}

As a consequence of Corollary \ref{our_res_prop} and Theorem \ref{Kam-Miya}, we see that in an $\A{m+1}$-fibration over a Noetherian factorial
domain, any $m$-tuple residual variable is necessarily a variable.
\begin{cor} \label{our_rem1-res}
Let $R$ be a Noetherian factorial domain and $A$ an $\A{m+1}$-fibration over $R$. Then an $m$-tuple $\ul{W}$ of $A$ is an $m$-tuple residual variable of $A$ over $R$ if and only if  $A = R[\ul{W}]^{[1]} = R^{[m+1]}$.
\end{cor}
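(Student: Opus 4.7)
The plan is to prove both implications by direct appeal to Corollary \ref{our_res_prop} and Theorem \ref{Kam-Miya}; essentially the corollary is just a packaging together of these two results in the factorial setting, so no substantive new work is needed.

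For the forward implication, assume $\ul{W}$ is an $m$-tuple residual variable of $A$ over $R$. By the definition of residual variable, $W_1, W_2, \ldots, W_m$ are algebraically independent over $R$, so $R[\ul{W}] \cong R^{[m]}$. Since $R$ is a Noetherian factorial domain, so is $R[\ul{W}]$. Moreover $R[\ul{W}]$ is trivially an $\A{m}$-fibration over $R$, and the defining property of a residual variable provides the identity $A \otimes_R k(P) = (R[\ul{W}] \otimes_R k(P))^{[1]}$ for every $P \in \mathrm{Spec}(R)$. Corollary \ref{our_res_prop} (with $n = m+1$ and $B = R[\ul{W}]$) therefore yields that $A$ is an $\A{1}$-fibration over the factorial ring $R[\ul{W}]$, and Theorem \ref{Kam-Miya} then forces $A = R[\ul{W}]^{[1]} = R^{[m+1]}$.

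For the reverse implication, assume $A = R[\ul{W}]^{[1]} = R^{[m+1]}$. Comparing transcendence degrees of the two descriptions of $A$ over $R$ shows that $W_1, \ldots, W_m$ must be algebraically independent over $R$, so that $R[\ul{W}] \cong R^{[m]}$. Tensoring the identity $A = R[\ul{W}]^{[1]}$ with $k(P)$ over $R$ yields $A \otimes_R k(P) = (R[\ul{W}] \otimes_R k(P))^{[1]}$ for every prime ideal $P$ of $R$, which is precisely the condition for $\ul{W}$ to be an $m$-tuple residual variable of $A$ over $R$.

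There is no real technical obstacle: the nontrivial inputs (the flatness descent and fibration tensor-product argument embedded in Corollary \ref{our_res_prop}, together with the Kambayashi--Miyanishi theorem) are already in place. The only point to keep track of is that algebraic independence of the $W_i$ is built into the definition of a residual variable, so that $R[\ul{W}]$ is a genuine polynomial ring over the factorial domain $R$ and consequently remains factorial, which is exactly what is needed to invoke Theorem \ref{Kam-Miya} in the last step of the forward direction.
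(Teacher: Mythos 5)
Your proof is correct and follows exactly the route the paper intends: the paper presents this corollary as an immediate consequence of Corollary \ref{our_res_prop} (applied with $B=R[\ul{W}]=R^{[m]}$) and Theorem \ref{Kam-Miya}, which is precisely your forward direction, and the reverse direction is the routine base-change observation you give. No discrepancies.
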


\begin{rem}
Note that Corollary \ref{our_rem1-res} need not hold for an $m$-tuple residual variable $\ul{W}$ of an $\A{n}$-fibration $A$ over $R$ when $n-m>1$. Example \ref{example-1} shows that $A$ may not be even a stably polynomial algebra over $R[\ul{W}]$. The next result shows that if $R$ is a polynomial algebra over a PID, then $A$ happens to be a stably polynomial algebra over $R[\ul{W}]$.
\end{rem}

\begin{cor} \label{our_thm_pid}
Let $R$ be a finite-dimensional polynomial algebra over a PID, $A$ an $\A{n}$-fibration over $R$ and $\ul{W}$ an $m$-tuple residual variable of $A$ over $R$. Then $A$ is a stably polynomial algebra over $R[\ul{W}]$.
\end{cor}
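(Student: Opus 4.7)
The plan is to reduce Corollary \ref{our_thm_pid} to the implication $(1)\Rightarrow(2)$ of the Main Theorem by establishing that $\Omega_R(A)$ is stably free over $A$ whenever $R$ is a finite-dimensional polynomial algebra over a PID. First I would verify that the pair $(A,B)$ with $B:=R[\ul{W}]$ falls within the set-up of the Main Theorem: since $B\cong R^{[m]}$ is trivially an $\A{m}$-fibration over $R$ and the definition of an $m$-tuple residual variable gives $A\otimes_R k(P)\cong (B\otimes_R k(P))^{[n-m]}$ for every $P\in\mbox{Spec}(R)$, Corollary \ref{our_res_prop} ensures that $A$ is an $\A{n-m}$-fibration over $B$.

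The crucial step is establishing that $\Omega_R(A)$ is stably free over $A$. By Asanuma's structure theorem (Theorem \ref{asa_struct-fib-th}), there exists an $R$-embedding $A\hookrightarrow R^{[\ell]}$ for some $\ell$ such that $A^{[\ell]}\cong\mbox{Sym}_{R^{[\ell]}}(\Omega_R(A)\otimes_A R^{[\ell]})$, and $\Omega_R(A)$ is a projective $A$-module of rank $n$. Consequently $N:=\Omega_R(A)\otimes_A R^{[\ell]}$ is a finitely generated projective $R^{[\ell]}$-module of rank $n$. Here the hypothesis on $R$ plays its role decisively: since $R$ is a finite-dimensional polynomial algebra over a PID, $R^{[\ell]}$ is also a polynomial algebra over the same PID, so the Quillen--Suslin theorem (Theorem \ref{Quillen_Suslin}) forces $N$ to be free. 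Therefore $A^{[\ell]}\cong (R^{[\ell]})^{[n]}=R^{[\ell+n]}$, which exhibits $A$ as a stably polynomial algebra over $R$, and Lemma \ref{our_lem_stably-free} then yields that $\Omega_R(A)$ is stably free over $A$.

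Finally, applying the implication $(1)\Rightarrow(2)$ of the Main Theorem to the pair $(A,B)$ delivers the desired conclusion that $A$ is stably polynomial over $R[\ul{W}]$. The real obstacle is that this last step cannot be shortcut by reading the $R[\ul{W}]$-polynomial structure directly off the isomorphism $A^{[\ell]}\cong R^{[\ell+n]}$: the elements $W_1,\ldots,W_m$ of $A$ need not extend to a system of polynomial variables of $R^{[\ell+n]}$ over $R$, so bridging ``stably polynomial over $R$'' and ``stably polynomial over $R[\ul{W}]$'' is precisely the content of the Main Theorem's equivalence, and the force of the PID hypothesis in this corollary is only to supply the input (stable freeness of $\Omega_R(A)$) that this equivalence needs.
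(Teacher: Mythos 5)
Your proof is correct, but it is organized differently from the paper's. The paper argues directly over the new base: having established via Corollary \ref{our_res_prop} that $A$ is an $\A{n-m}$-fibration over $R[\ul{W}]$, it applies Asanuma's structure theorem to \emph{that} fibration, so that $A^{[r]}$ becomes the symmetric algebra of a finitely generated projective module over a polynomial ring over $R[\ul{W}]$ --- which is still a polynomial ring over the PID --- and Quillen--Suslin (Theorem \ref{Quillen_Suslin}) frees that module at once, giving $A^{[r]} = R[\ul{W}]^{[n-m+r]}$ in a single stroke. You instead apply Asanuma's theorem and Quillen--Suslin to the fibration over $R$ itself, obtain $A^{[\ell]} = R^{[\ell+n]}$, extract stable freeness of $\Omega_R(A)$ via Lemma \ref{our_lem_stably-free}, and only then invoke Theorem \ref{our_res_cor}. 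Both routes rest on exactly the same two ingredients (Asanuma plus Quillen--Suslin); yours is one step longer, but it has the merit of exhibiting the corollary as a literal instance of the Main Theorem and of isolating precisely where the PID hypothesis enters, namely in verifying the stable freeness of $\Omega_R(A)$. Either argument also yields the paper's subsequent remark that ``finite-dimensional polynomial algebra over a PID'' can be replaced by ``regular ring with trivial Grothendieck group'', since in that case the relevant projective module is stably free rather than free, which still suffices. Your closing caveat --- that the isomorphism $A^{[\ell]}\cong R^{[\ell+n]}$ cannot be read directly as a polynomial structure over $R[\ul{W}]$ --- is well taken and correctly identifies why the passage through Theorem \ref{our_res_cor} (or, in the paper's version, through the structure theorem over $R[\ul{W}]$) is unavoidable.
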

\begin{proof}
 By Corollary \ref{our_res_prop}, $A$ is an $\A{n-m}$-fibration over $R[\ul{W}]$ and hence by (\cite{Asanuma_fibre_ring}, Corollary 3.5), $A^{[r]} = \mbox{Sym}_{R[\ul{W}]}(M)$ for some $r \in \mathbb{N}$ where $M$ is a finitely generated projective $R[\ul{W}]$ module of rank $n-m+r$. Since $R$ is a polynomial algebra over a PID, by Theorem \ref{Quillen_Suslin}, we get that $A^{[r]} = R[\ul{W}]^{[n-m+r]}$, i.e., $A$ is a stably polynomial algebra over $R[\ul{W}]$.
\end{proof}
\begin{rem}
\item [\rm (1)] Corollary \ref{our_rem1-res} shows that if $n-m=1$, then we have $A = R[\ul{W}]^{[1]}$ in Corollary \ref{our_thm_pid}. However, if $n-m>1$, then an example of Asanuma (\cite{Asanuma_fibre_ring}, Theorem 5.1) shows that we need not have $A = R[\ul{W}]^{[n-m]}$ even in the case $R$ is a PID, $m=1$ and $n=3$. In Asanuma's example, $\mathbb{Q}\nhkrghtarrw R$. When $\mathbb{Q} \hookrightarrow R$, it is not known whether, in Corollary \ref{our_thm_pid}, one can conclude that $A = R[\ul{W}]^{[n-m]}$ even in the case $m=1$ and $n=3$. For instance in (\cite{BD_AFNFIB}, Example 4.13), $W$ is a residual variable in $A = R[X,Y,Z]$, where $R$ is a DVR containing $\mathbb{Q}$, and it is not known whether $A = R[W]^{[2]}$.

\item [\rm (2)] The proof of Corollary 3.10 shows that the hypothesis that $R$ is ``a finite-dimensional polynomial algebra over a PID'' can be replaced by the condition that $R$ is ``a regular ring with trivial Grothendieck group''.
\end{rem}

The following observation on module of differentials of an $\A{n}$-fibration having residual variables is crucial for our main theorem.

\begin{lem} \label{our_lem-Omega}
Let $R$ be a Noetherian ring and $A$ an $\A{m+k}$-fibration over $R$. If $\ul{W}$ is an $m$-tuple residual 
variable of $A$ over $R$, then $\Omega_R(A) = \Omega_{R[\ul{W}]}(A) \oplus A^m$. 
In particular, $\Omega_R(A)$ is a stably free $A$-module if and only if $\Omega_{R[\ul{W}]}(A)$ is a stably free $A$-module.
\end{lem}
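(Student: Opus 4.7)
My plan is to apply the first fundamental exact sequence of differentials to the tower $R \subset B \subset A$, with $B := R[\ul{W}]$, and exploit the projectivity statements provided by Asanuma's Theorem \ref{asa_struct-fib-th} to force both the surjectivity and injectivity of the relevant maps. Since $W_1, \dots, W_m$ are algebraically independent over $R$, the subring $B = R[\ul{W}]$ is a genuine polynomial ring $R^{[m]}$, so $\Omega_R(B) \otimes_B A \cong A^m$ with natural basis $dW_1, \dots, dW_m$. By Corollary \ref{our_res_prop}, $A$ is an $\A{k}$-fibration over $B$, hence applying Theorem \ref{asa_struct-fib-th} to the two fibrations $R \subset A$ and $B \subset A$ yields that $\Omega_R(A)$ and $\Omega_B(A)$ are projective $A$-modules of ranks $m+k$ and $k$ respectively.

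The next step is to write down the first fundamental sequence
$$A^m \stackrel{f}{\longrightarrow} \Omega_R(A) \longrightarrow \Omega_{R[\ul{W}]}(A) \longrightarrow 0,$$
whose right-hand term is projective, so the sequence splits and $\mathrm{image}(f)$ becomes a projective direct summand of $\Omega_R(A)$ of rank $(m+k) - k = m$. To finish, I need to show $f$ is injective. The surjection $A^m \twoheadrightarrow \mathrm{image}(f)$ between two finitely generated projective $A$-modules of equal rank $m$ splits (the target is projective), so $A^m \cong \ker(f) \oplus \mathrm{image}(f)$; comparing ranks forces $\ker(f)$ to be a finitely generated projective $A$-module of rank zero, and such a module is zero by Nakayama at every localization. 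This gives the split short exact sequence $0 \to A^m \to \Omega_R(A) \to \Omega_{R[\ul{W}]}(A) \to 0$ and hence the desired decomposition $\Omega_R(A) \cong A^m \oplus \Omega_{R[\ul{W}]}(A)$.

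The ``in particular'' clause is then a formality: an identity $\Omega_R(A) \oplus A^s \cong A^t$ translates to $\Omega_{R[\ul{W}]}(A) \oplus A^{m+s} \cong A^t$ under the above isomorphism, and the converse direction follows symmetrically. I do not anticipate a substantial obstacle here; the only mildly delicate point is the rank-zero vanishing, but this is elementary once one observes that $\ker(f)$ is finitely generated (being a summand of $A^m$). A notable feature of this route is that it bypasses having to invoke formal smoothness of $B \to A$ or to construct explicit extensions of the derivations $\partial/\partial W_i$ from $B$ to $A$, relying instead purely on the rank arithmetic of projective modules provided by Asanuma's theorem.
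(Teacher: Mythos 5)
Your proof is correct, but it settles the decisive point by a different mechanism than the paper. Both arguments run the first fundamental sequence of differentials for $R \subset R[\ul{W}] \subset A$ and both lean on Asanuma's Theorem \ref{asa_struct-fib-th} via Corollary \ref{our_res_prop}; the divergence is in how the left-hand map $A^m \to \Omega_R(A)$ is shown to be a split injection. The paper invokes the full form of the first fundamental sequence (\cite{Matsumura_Algebra}, Theorem 57): the sequence is split exact on the left provided every $R$-derivation of $R[\ul{W}]$ into an $A$-module extends to $A$, and this extension property is extracted from the structural embedding of $A$ into a polynomial ring over $R[\ul{W}]$ furnished by Asanuma's theorem. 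You instead use only the universally valid right-exact sequence, split it on the right because $\Omega_{R[\ul{W}]}(A)$ is projective, and then kill $\ker(f)$ by rank arithmetic, using that $\Omega_R(A)$ and $\Omega_{R[\ul{W}]}(A)$ are projective of constant ranks $m+k$ and $k$ and that a finitely generated projective module of rank zero vanishes. Your route uses the quantitative part of Asanuma's theorem (the ranks) where the paper uses the structural part (the embedding), and it sidesteps the derivation-extension step entirely --- a step which, as written in the paper, requires a little care, since a derivation $R[\ul{W}] \to M$ is there extended through an ambient polynomial ring containing $A$, while an $A$-module $M$ is not a priori a module over that larger ring. Both proofs are complete; yours is the more self-contained of the two, and your handling of the ``in particular'' clause is routine and correct.
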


\begin{proof}
By Corollary \ref{our_res_prop}, $A$ is an $\A{k}$-fibration over $R[\ul{W}]$ and hence by Theorem \ref{asa_struct-fib-th}, $A$ is an $R[\ul{W}]$-subalgebra of a polynomial algebra $B$ over $R[\ul{W}]$ and $\Omega_{R[\ul{W}]}(A)$ is a projective $A$-module of rank $k$. Since $R \hookrightarrow R[\ul{W}] \hookrightarrow A \hookrightarrow B$, and since for any $A$-module $M$, every $R$-derivation $d: R[\ul{W}] \longrightarrow M$, can be extended to an $R$-derivation $\tilde{d}|_A: A \longrightarrow M$ where $\tilde{d}|_A$ is the restriction of an extension $\tilde{d}: B \longrightarrow M$ of $d$, we have the following split short exact sequence (\cite{Matsumura_Algebra}, Theorem 57, p186):

$$0 \longrightarrow A \otimes_{R[\ul{W}]} \Omega_R(R[\ul{W}]) \longrightarrow \Omega_R(A) \longrightarrow \Omega_{R[\ul{W}]}(A) \longrightarrow 0$$

This shows that $\Omega_R(A) = \Omega_{R[\ul{W}]}(A) \oplus A \otimes_{R[\ul{W}]} \Omega_R(R[\ul{W}]) = \Omega_{R[\ul{W}]}(A) \oplus A^m$.
\end{proof}

We now prove our main result.

\begin{thm} \label{our_res_cor}
Let $R$ be a Noetherian ring and $A$ an $\A{n}$-fibration over $R$ such that $\Omega_R(A)$ is a stably free $A$-module. Suppose $\ul{W}$ is an $m$-tuple residual variable of $A$ over $R$. Then $A$ is a stably polynomial algebra over $R[\ul{W}]$; specifically, $A^{[\ell]} = R[\ul{W}]^{[n-m+\ell]}$ for some $\ell \in \mathbb{N}$.
\end{thm}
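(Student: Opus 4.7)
The plan is essentially to combine three results already proved in the paper: Corollary \ref{our_res_prop}, Lemma \ref{our_lem-Omega}, and Asanuma's structure Theorem \ref{asa_struct-fib-th}. Set $B := R[\ul{W}]$, which is a polynomial ring in $m$ variables over $R$ (since $\ul{W}$ is algebraically independent over $R$) and is therefore itself an $\A{m}$-fibration over $R$. The hypothesis that $\ul{W}$ is an $m$-tuple residual variable is precisely the condition $A\otimes_R k(P) = (B\otimes_R k(P))^{[n-m]}$ required to apply Corollary \ref{our_res_prop}, so I first conclude that $A$ is an $\A{n-m}$-fibration over $B$.

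Next I apply Lemma \ref{our_lem-Omega}, which yields the splitting
\[
\Omega_R(A) \;\cong\; \Omega_{B}(A) \oplus A^m.
\]
Since $\Omega_R(A)$ is stably free over $A$ by hypothesis, the direct summand $\Omega_{B}(A)$ is also stably free over $A$. This places us exactly in the setting of Asanuma's Theorem \ref{asa_struct-fib-th}, applied now to the $\A{n-m}$-fibration $A$ over the base $B$: the stable freeness of $\Omega_{B}(A)$ forces $A$ to be a stably polynomial algebra over $B$, so there exist $\ell, s \in \mathbb{N}$ with
\[
A^{[\ell]} \;=\; B^{[s]} \;=\; R[\ul{W}]^{[s]}.
\]

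It remains to identify $s = n-m+\ell$, which is a routine dimension check. Tensoring the equality $A^{[\ell]} = R[\ul{W}]^{[s]}$ with $k(P)$ over $R$ for any prime $P$ of $R$ gives, on one side, $(A\otimes_R k(P))^{[\ell]} = k(P)^{[n+\ell]}$ (because $A$ is an $\A{n}$-fibration), and on the other side, $k(P)^{[m+s]}$. Comparing Krull dimensions of these polynomial rings over a field yields $n+\ell = m+s$, i.e.\ $s = n-m+\ell$, completing the proof.

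There is no genuine obstacle: the entire argument is a careful assembly of tools already in hand. The only thing that requires a moment of attention is ensuring that the hypothesis of Asanuma's theorem is verified over the right base ($B$ rather than $R$), which is precisely what Lemma \ref{our_lem-Omega} delivers by transferring stable freeness from $\Omega_R(A)$ to $\Omega_{B}(A)$.
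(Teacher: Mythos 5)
Your proposal is correct and follows exactly the paper's route: Corollary \ref{our_res_prop} to get the $\A{n-m}$-fibration over $R[\ul{W}]$, Lemma \ref{our_lem-Omega} to transfer stable freeness to $\Omega_{R[\ul{W}]}(A)$, and Theorem \ref{asa_struct-fib-th} to conclude. Your closing dimension count identifying $s=n-m+\ell$ is a harmless elaboration of what the paper leaves implicit (the rank statement in Asanuma's theorem already gives it).
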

\begin{proof}
By Corollary \ref{our_res_prop}, $A$ is an $\A{n-m}$-fibration over $R[\ul{W}]$ and hence, by Lemma \ref{our_lem-Omega}, $\Omega_{R[\ul{W}]}(A)$ is a stably free $A$-module. Therefore, we get the result by Theorem \ref{asa_struct-fib-th}.
\end{proof}

The following example shows the necessity of the assumption ``$\Omega_R(A)$ is a stably free $A$-module'' in Theorem \ref{our_res_cor} even when R is a regular factorial affine domain over the field of
real numbers.
\begin{ex} \label{example-1}
Let $R={\mathbb R}[X,Y,Z]/(X^2+Y^2+Z^2-1)$. It is
well-known that $K_0(R)$, the Grothendieck group of $R$, is non-trivial (in fact, it is
${\mathbb Z} \oplus {\mathbb Z}/(2)$); in particular, there exists a finitely generated projective 
$R$-module $M$ of rank $2$ which is not stably free. Let $W$ be an indeterminate over $R$ and 
$A=\mbox{Sym}_R (M \oplus RW)$. Then $A_P=R_P[W]^{[2]}$ for all $P \in$ Spec($R$) so that $A$ is an $\A{3}$-fibration over $R$ with $W$ as a residual variable.  If $A^{[\ell]} =R[W]^{[\ell +2]}$, then we would have $\mbox{Sym}_R (M \oplus RW \oplus R^{\ell}) \cong \mbox{Sym}_R (RW \oplus R^{\ell +2})$ and hence, by (\cite{Eakin-Heinzer_A-Cncl-prob}, Lemma 1.3), we would have $M \oplus RW \oplus R^{\ell} \cong RW \oplus R^{\ell +2}$ contradicting that $M$ is not stably free.
\end{ex} 

\begin{rem} \label{example-2}
When $R$ is a Noetherian factorial domain and $m=n-1$
then Corollary \ref{our_rem1-res} shows that the hypothesis ``$\Omega_R(A)$ is stably
free'' may be dropped from Theorem \ref{our_res_cor}; in fact, in this case, $A=R[\ul{W}]^{[1]}$. But even over a (non-factorial) Dedekind domain and even for $n=2$ and $m=1$, an $\A{2}$-fibration need not be stably polynomial over $R[W]$ when $W$ is a residual variable of $A$. For instance, choose a non-principal ideal $I$ of the Dedekind domain $R$ and set $A=\mbox{Sym}_R (I \oplus RW)$. As in Example \ref{example-1}, $A$ is an $\A{2}$-fibration over $R$, $W$ is a residual variable of $A$ but $A$ is not stably polynomial over $R[W]$.
\end{rem}

The following result gives a converse of Theorem \ref{our_res_cor}.

\begin{prop} \label{our_prop_omega-stable}
Let $R$ be a Noetherian ring and $A$ an $\A{n}$-fibration over $R$. Suppose that there exists an $m$-tuple residual variable $\ul{W}$ of $A$ over $R$ such that $A$ is a stably polynomial algebra over $R[\ul{W}]$. Then $\Omega_R(A)$ is a stably free $A$-module.
\end{prop}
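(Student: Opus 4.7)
The proposition follows almost immediately by combining two results already established in the paper. Here is my plan.

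First, I would apply Lemma \ref{our_lem_stably-free} in the relative setting with $R[\ul{W}]$ in place of the base ring $R$. Since the hypothesis says precisely that $A$ is a stably polynomial algebra over $R[\ul{W}]$, that lemma yields that $\Omega_{R[\ul{W}]}(A)$ is a stably free $A$-module.

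Next, I would invoke Lemma \ref{our_lem-Omega}. Its hypotheses are satisfied: $A$ is an $\A{n}$-fibration over $R$, and $\ul{W}$ is an $m$-tuple residual variable of $A$ over $R$ (so in the notation of the lemma, $k = n-m$). The lemma then gives the decomposition
\[
\Omega_R(A) \;\cong\; \Omega_{R[\ul{W}]}(A) \oplus A^m.
\]
Since $\Omega_{R[\ul{W}]}(A)$ is stably free by the previous step, adjoining the free summand $A^m$ yields a module that is still stably free (by definition, add further free summands on both sides until the original becomes free). Hence $\Omega_R(A)$ is a stably free $A$-module, as required.

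I do not foresee any real obstacle here; the work has been done in Lemmas \ref{our_lem_stably-free} and \ref{our_lem-Omega}, and this proposition is essentially a bookkeeping step to package the converse direction of the main theorem together with Theorem \ref{our_res_cor}. The only thing to be careful about is that Lemma \ref{our_lem_stably-free} is stated for an algebra over a base ring, so one must verify that the hypothesis in the present statement ($A$ stably polynomial over $R[\ul{W}]$) is exactly what the lemma needs, which it is.
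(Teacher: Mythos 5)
Your proposal is correct and follows exactly the paper's own argument: the paper likewise reduces to showing $\Omega_{R[\ul{W}]}(A)$ is stably free via Lemma \ref{our_lem-Omega} and then cites Lemma \ref{our_lem_stably-free} applied over the base $R[\ul{W}]$. You have merely spelled out the same two steps in more detail.
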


\begin{proof}
By Lemma \ref{our_lem-Omega}, it sufficies to show that $\Omega_{R[\ul{W}]}(A)$ is a stably free $A$-module. This follows from Lemma \ref{our_lem_stably-free}.
\end{proof}
For convenience, we state below an easy result.

\begin{lem} \label{our_prop_fib_imply_res}
Let $R \hookrightarrow B \hookrightarrow A$ be integral domains such that $A$ is an $\A{n}$-fibration over $B$. Then $A \otimes_R k(P)$ is an $\A{n}$-fibration over $B \otimes_R k(P)$ for every $P \in$ Spec($R$) . Moreover, if $B=R^{[m]}$, then $A \otimes_R k(P)$ is a stably polynomial algebra over $B \otimes_R k(P)$ for each $P \in$ Spec($R$).
\end{lem}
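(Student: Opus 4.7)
The plan is to handle the two assertions separately. For the first assertion, finite generation and flatness of $A \otimes_R k(P)$ over $B \otimes_R k(P)$ follow immediately by base change from the corresponding properties of $A$ over $B$. To verify the fiber condition, I would identify each prime $Q$ of $B \otimes_R k(P)$ with a prime $Q'$ of $B$ satisfying $Q' \cap R = P$, and observe that the residue field at $Q$ coincides with $k(Q')$ (since $PB \subseteq Q'$, the local ring $(B \otimes_R k(P))_Q$ has the same residue field as $B_{Q'}$). Then
$$
(A \otimes_R k(P)) \otimes_{B \otimes_R k(P)} k(Q) \;=\; A \otimes_B k(Q') \;=\; k(Q')^{[n]} \;=\; k(Q)^{[n]},
$$
where the middle equality uses the hypothesis that $A$ is an $\A{n}$-fibration over $B$.

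For the second assertion, assume $B = R^{[m]}$, so $B \otimes_R k(P) = k(P)^{[m]}$. The first part already shows that $A \otimes_R k(P)$ is an $\A{n}$-fibration over $k(P)^{[m]}$. I would then apply Asanuma's structure theorem (Theorem \ref{asa_struct-fib-th}) to this fibration to produce some $s \in \mathbb{N}$ for which
$$
(A \otimes_R k(P))^{[s]} \;=\; \mbox{Sym}_{D}\bigl(\Omega_{k(P)^{[m]}}(A \otimes_R k(P)) \otimes_{A \otimes_R k(P)} D\bigr),
$$
where $D = k(P)^{[m+s]}$, and where the module in question is projective of rank $n$ over $A \otimes_R k(P)$, hence projective of rank $n$ over $D$ after base change. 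Since $D$ is a polynomial algebra over the field $k(P)$, the Quillen--Suslin theorem (Theorem \ref{Quillen_Suslin}) forces this rank-$n$ projective $D$-module to be free, giving
$$
(A \otimes_R k(P))^{[s]} \;=\; D^{[n]} \;=\; (B \otimes_R k(P))^{[s+n]},
$$
which is precisely the required stably-polynomial conclusion.

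The only delicate point is the prime-correspondence step in the first assertion, but this is just a careful bookkeeping exercise rather than a genuine obstacle. The second assertion then reduces to a clean combination of Asanuma's theorem with Quillen--Suslin, closely following the pattern already used in the proof of Corollary \ref{our_thm_pid}.
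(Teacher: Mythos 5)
Your proof is correct and follows essentially the same route as the paper: the prime-correspondence and residue-field identification $k(Q)=k(Q')$ for the fibre condition is exactly the paper's argument. In fact your write-up is slightly more complete, since the paper's printed proof stops after the fibration assertion and leaves the ``moreover'' part implicit; your combination of Theorem \ref{asa_struct-fib-th} with Theorem \ref{Quillen_Suslin} is precisely the intended argument, mirroring the proof of Corollary \ref{our_thm_pid}.
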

\begin{proof}
Let $P \in$ Spec($R$), $\ol{A} = A \otimes_R k(P)$ and $\ol{B} = B \otimes_R k(P)$. Since $A$ is a finitely generated flat $B$-algebra, clearly $\ol{A}$ is a finitely generated flat $\ol{B}$-algebra. Note that $\ol{A} = A \otimes_R k(P) = A \otimes_B B \otimes_{R} k(P) = A \otimes_B \ol{B}$. Let $\ol{Q} \in$ Spec($\ol{B}$) and let $Q$ be the prime ideal of $B$ for which $Q \ol{B} = \ol{Q}$. Then $Q \cap R = P$. Now $k(Q)$ is the field of fractions of $B/Q$ and hence of $B_P/QB_P \cong \ol{B}/\ol{Q}$; thus $k(Q)= k(\ol{Q})$. Hence, $\ol{A} \otimes_{\ol{B}} k(\ol{Q}) = A \otimes_B \ol{B} \otimes_{\ol{B}} k(\ol{Q}) = A \otimes_B k(Q) = k(Q)^{[n]} = k(\ol{Q})^{[n]}$.
\end{proof}

\begin{cor} \label{our_res_iff_A1-fib}
Let $R$ be a Noetherian domain and $A$ an $\A{m+1}$-fibration over $R$. Then an $m$-tuple $\ul{W}$ of $A$ is an $m$-tuple residual variable of $A$ over $R$ if and only if $A$ is an $\A{1}$-fibration over $R[\ul{W}]$.
\end{cor}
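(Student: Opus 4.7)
The corollary is essentially a packaging of results already proved in the paper, so I expect the argument to be short and to present no genuine obstacle; the only slightly delicate point is recovering the algebraic independence of $\ul{W}$ in the converse direction.

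For the forward implication, I would start from the definition: if $\ul{W}$ is an $m$-tuple residual variable, then $W_1,\dots,W_m$ are by definition algebraically independent over $R$, so $B:=R[\ul{W}]=R^{[m]}$ is an $\A{m}$-fibration over $R$, and the fibre condition built into the definition of a residual variable is exactly the hypothesis $A\otimes_R k(P)=(B\otimes_R k(P))^{[n-m]}$ required by Corollary \ref{our_res_prop}. Applying that corollary with $n=m+1$ immediately yields that $A$ is an $\A{1}$-fibration over $R[\ul{W}]$.

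For the converse, suppose $A$ is an $\A{1}$-fibration over $R[\ul{W}]$. The first step is to recover algebraic independence of $\ul{W}$ over $R$. By Theorem \ref{asa_struct-fib-th}, $A$ embeds in a polynomial ring over the domain $R$ and hence is itself a domain of transcendence degree $m+1$ over $R$; localising the fibration $R[\ul{W}]\hookrightarrow A$ at the zero ideal of $R[\ul{W}]$ shows that $A$ has transcendence degree $1$ over $R[\ul{W}]$, so $R[\ul{W}]$ has transcendence degree $m$ over $R$ and $\ul{W}$ is algebraically independent. In particular, $R[\ul{W}]=R^{[m]}$.

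The second step verifies the fibre condition. I fix $P\in\mathrm{Spec}(R)$ and set $\ol{A}:=A\otimes_R k(P)$ and $\ol{B}:=R[\ul{W}]\otimes_R k(P)=k(P)^{[m]}$. Lemma \ref{our_prop_fib_imply_res}, applied to $R\hookrightarrow R[\ul{W}]\hookrightarrow A$, shows that $\ol{A}$ is an $\A{1}$-fibration over the Noetherian factorial domain $\ol{B}$, whence by Theorem \ref{Kam-Miya} (Kambayashi--Miyanishi) we get $\ol{A}=\ol{B}^{[1]}$. This is precisely the fibre condition in the definition of an $m$-tuple residual variable, so $\ul{W}$ is a residual variable of $A$ over $R$, completing the proof.
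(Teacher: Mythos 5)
Your proof is correct and follows essentially the same route as the paper, whose proof is a one-line appeal to Corollary \ref{our_res_prop}, Lemma \ref{our_prop_fib_imply_res} and Corollary \ref{our_rem1-res} (the last being itself just Theorem \ref{Kam-Miya} applied as you do, to the fibres over the factorial domains $k(P)^{[m]}$). Your explicit verification that $\ul{W}$ is algebraically independent over $R$ in the converse direction is a point the paper leaves implicit, and it is handled correctly.
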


\begin{proof}
Follows from Corollary \ref{our_res_prop}, Lemma \ref{our_prop_fib_imply_res} and Corollary \ref{our_rem1-res}.
\end{proof}

Finally, we give a condition for an $m$-tuple residual variable in an $\A{m+1}$-fibration to be a variable.
\begin{cor} \label{our_res_thm}
Let $R$ be a Noetherian domain and $A$ an $\A{m+1}$-fibration over $R$ such that $\Omega_R(A)$ is a stably free $A$-module. Suppose that either $R$ contains $\mathbb{Q}$ or $R$ is seminormal. Then for an $m$-tuple $\ul{W}$ of $A$, the following are equivalent:
\begin{enumerate}
\item [\rm (I)] $\ul{W}$ is an $m$-tuple residual variable of $A$ over $R$.
\item [\rm (II)] $A = R[\ul{W}]^{[1]} = R^{[m+1]}$.
\end{enumerate}
\end{cor}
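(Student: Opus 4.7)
The plan is to derive this corollary as an immediate cancellation-type consequence of Theorem \ref{our_res_cor} combined with Hamann's theorem (Theorem \ref{Hamann}). The two implications are highly asymmetric: (II) $\Rightarrow$ (I) is essentially tautological, while (I) $\Rightarrow$ (II) is where all the work (and all the hypotheses) are used.

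For (II) $\Rightarrow$ (I), I would simply note that $A = R[\ul{W}]^{[1]} = R^{[m+1]}$ forces $\ul{W}$ to be algebraically independent over $R$, and that for every $P \in$ Spec($R$) one has $A \otimes_R k(P) = (R[\ul{W}] \otimes_R k(P))^{[1]}$, which is the defining property of an $m$-tuple residual variable in an $\A{m+1}$-fibration.

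For (I) $\Rightarrow$ (II), the plan is to first apply Theorem \ref{our_res_cor} with $n = m+1$, which uses the stable-freeness of $\Omega_R(A)$ to produce some $\ell \in \mathbb{N}$ with $A^{[\ell]} = R[\ul{W}]^{[\ell+1]}$. Since $\ul{W}$ is algebraically independent over $R$, we have $R[\ul{W}] \cong R^{[m]}$, so what remains is a cancellation problem over the base ring $R[\ul{W}]$. I would then invoke Theorem \ref{Hamann} applied to $R[\ul{W}]$ to conclude $A = R[\ul{W}]^{[1]}$; combined with $R[\ul{W}] = R^{[m]}$ this gives $A = R^{[m+1]}$.

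The only point requiring verification — and the main, though minor, obstacle — is checking that the hypothesis of Hamann's theorem transfers from $R$ to $R[\ul{W}]$. The case $R \supseteq \mathbb{Q}$ is trivial since then $R[\ul{W}] \supseteq \mathbb{Q}$ as well. When $R$ is seminormal (and, being a domain, reduced), one uses the standard fact that polynomial extensions preserve seminormality, so $R[\ul{W}] = R^{[m]}$ is also reduced and seminormal. Either way Theorem \ref{Hamann} applies to $R[\ul{W}]$ and finishes the argument.
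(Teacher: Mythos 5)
Your proposal is correct and follows exactly the route the paper intends: apply Theorem \ref{our_res_cor} (with $n=m+1$) to get $A^{[\ell]}=R[\ul{W}]^{[\ell+1]}$ and then cancel via Theorem \ref{Hamann} over the base $R[\ul{W}]$; the paper's own proof is just the one-line citation of these two results. Your explicit check that the Hamann hypothesis passes from $R$ to $R[\ul{W}]$ (seminormality being preserved under polynomial extension) is a detail the paper leaves implicit, and it is handled correctly.
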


\begin{proof} Follows from Theorem \ref{our_res_cor} and Theorem \ref{Hamann}.
\end{proof}

\section{Appendix}
Theorem \ref{our_res_cor} can be slightly generalised when envisaged as a statement on ``tensor product decomposition'' discussed in \cite{Bass_Wright}. 
Let $R\subset B\subset A$ be Noetherian rings with $A$ an $\A{n}$-fibration over $R$
and also an $\A{n-m}$-fibration over $B$. Theorem \ref{our_res_cor} shows that if 
$B= R^{[m]}$ and $\Omega_R(A) $ is stably free then $A^{[\ell]}= B^{[\ell]} \otimes_{R^{[\ell]}} C$, where $C= R^{[n-m+\ell]}$. 
One can see below (Proposition \ref{p1}) that even when $B$ is only an $\A{m}$-fibration over $R$, it is stably
a factor in a tensor product decomposition of $A$, even without the hypothesis that $\Omega_R(A) $ is stably free. 
We first note that the proof of Lemma \ref{our_lem-Omega} can be seen to yield the following general version:

\begin{lem}\label{l1}
Let $R \subset B \subset A$ be Noetherian rings such that $A$ is an $\A{n}$-fibration over $R$ and
$B$ an $\A{m}$-fibration over $R$ with $A \otimes_R k(P) = {B \otimes_R k(P)}^{[n-m]} $ for all $P \in$ Spec($R$).
Then $\Omega_R(A) = \Omega_{B}(A) \oplus (\Omega_R(B)\otimes_{B} A)$. 
\end{lem}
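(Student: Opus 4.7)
The plan is to run the proof of Lemma~\ref{our_lem-Omega} with the polynomial ring $R[\ul{W}]$ replaced by the affine fibration $B$; the polynomial structure of $R[\ul{W}]$ was used there only to simplify $A\otimes_{R[\ul{W}]}\Omega_R(R[\ul{W}])\cong A^m$, and dropping that simplification produces precisely the more general conclusion claimed here.

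First I would apply Corollary~\ref{our_res_prop} to deduce that $A$ is an $\A{n-m}$-fibration over $B$. Then, by Asanuma's structure theorem (Theorem~\ref{asa_struct-fib-th}) applied to the pair $B\subset A$, I would obtain that $\Omega_B(A)$ is a finitely generated projective $A$-module of rank $n-m$ and that $A$ is (up to isomorphism) a $B$-subalgebra of some polynomial ring $B^{[s]}$. With these ingredients in hand I would write down the first fundamental sequence of differentials for the tower $R\subset B\subset A$:
$$A\otimes_{B}\Omega_R(B)\longrightarrow \Omega_R(A)\longrightarrow \Omega_{B}(A)\longrightarrow 0.$$

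The main step, and the only one that requires any real verification, is to argue that this sequence is short exact and split. Exactly as in Lemma~\ref{our_lem-Omega}, I would do this by an extension-of-derivations argument: for any $A$-module $M$ and any $R$-derivation $d\colon B\to M$, the embedding $A\hookrightarrow B^{[s]}$ together with the polynomial nature of $B^{[s]}$ over $B$ allows $d$ to be extended to an $R$-derivation $\tilde{d}\colon B^{[s]}\to M$, and then restricting $\tilde{d}$ to $A$ produces the desired lift in $\mathrm{Der}_R(A,M)$. This gives surjectivity of $\mathrm{Der}_R(A,M)\to \mathrm{Der}_R(B,M)$ for every $A$-module $M$, which together with the projectivity of $\Omega_{B}(A)$ yields an $A$-linear splitting, and one simply reads off
$$\Omega_R(A)\;\cong\;\Omega_{B}(A)\;\oplus\;\bigl(\Omega_R(B)\otimes_{B}A\bigr).$$
The only technical subtlety, namely arranging the module-theoretic setup so that ``extension of $d$ to $B^{[s]}$'' makes literal sense, is exactly the point already dealt with in the proof of Lemma~\ref{our_lem-Omega}; it is imported wholesale and imposes no new hypothesis on $(R,B,A)$, since Asanuma's theorem furnishes the same embedding of $A$ into a polynomial ring over $B$ as in the earlier argument.
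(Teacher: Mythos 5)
Your proposal is correct and takes essentially the same route as the paper: the paper's own justification of Lemma~\ref{l1} is precisely the observation that the proof of Lemma~\ref{our_lem-Omega} goes through verbatim with $R[\ul{W}]$ replaced by $B$, using Corollary~\ref{our_res_prop} and Theorem~\ref{asa_struct-fib-th} for the pair $B \subset A$ and the same extension-of-derivations argument to split the fundamental exact sequence of differentials.
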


As a consequence we have 
\begin{prop}\label{p1}
Under the hypothesis of Lemma \ref{l1}, there exists $\ell \ge 0$ such that 
$A^{[\ell]}= C \otimes_{R^{[\ell]}}{B^{[\ell]}}$ for some $R^{[\ell]}$-algebra $C$. 
\end{prop}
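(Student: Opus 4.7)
The plan is to combine the module-of-differentials decomposition of Lemma \ref{l1} with Asanuma's structure theorem (Theorem \ref{asa_struct-fib-th}) applied separately to the two fibrations $B/R$ and $A/R$, and then to promote the direct-sum decomposition to a tensor-product decomposition of algebras via the identity $\mathrm{Sym}_S(M \oplus N) \cong \mathrm{Sym}_S(M) \otimes_S \mathrm{Sym}_S(N)$.

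First, I would fix an $\ell$ large enough that Asanuma's theorem applies simultaneously to the $\A{n}$-fibration $A$ over $R$ and to the $\A{m}$-fibration $B$ over $R$; this is legitimate because if the conclusion of Theorem \ref{asa_struct-fib-th} holds for some $m_0$, it holds for every larger exponent (adjoining one more variable on both sides preserves the symmetric-algebra identity). For such an $\ell$ we have
\[
A^{[\ell]} = \mathrm{Sym}_{R^{[\ell]}}\bigl(\Omega_R(A) \otimes_A R^{[\ell]}\bigr)
\quad\text{and}\quad
B^{[\ell]} = \mathrm{Sym}_{R^{[\ell]}}\bigl(\Omega_R(B) \otimes_B R^{[\ell]}\bigr).
\]

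Next, I would invoke Lemma \ref{l1} to write $\Omega_R(A) = \Omega_B(A) \oplus (\Omega_R(B) \otimes_B A)$ as $A$-modules and tensor this with $R^{[\ell]}$ over $A$ to obtain
\[
\Omega_R(A) \otimes_A R^{[\ell]} \;=\; \bigl(\Omega_B(A) \otimes_A R^{[\ell]}\bigr) \,\oplus\, \bigl(\Omega_R(B) \otimes_B R^{[\ell]}\bigr),
\]
using the associativity of tensor products to simplify the second summand. Applying $\mathrm{Sym}_{R^{[\ell]}}(-)$ and using that the symmetric algebra of a direct sum is the tensor product of the symmetric algebras, I get
\[
A^{[\ell]} \;=\; \mathrm{Sym}_{R^{[\ell]}}\bigl(\Omega_B(A) \otimes_A R^{[\ell]}\bigr) \,\otimes_{R^{[\ell]}}\, \mathrm{Sym}_{R^{[\ell]}}\bigl(\Omega_R(B) \otimes_B R^{[\ell]}\bigr).
\]
By the choice of $\ell$ the second factor is exactly $B^{[\ell]}$, so setting $C := \mathrm{Sym}_{R^{[\ell]}}\bigl(\Omega_B(A) \otimes_A R^{[\ell]}\bigr)$ yields the desired decomposition $A^{[\ell]} = C \otimes_{R^{[\ell]}} B^{[\ell]}$.

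The only real subtlety is the uniform choice of $\ell$ working for both Asanuma identities simultaneously, together with the verification that the tensor product coming from the symmetric algebra splitting is over $R^{[\ell]}$ rather than some larger ring; both are bookkeeping issues that follow from the fact that symmetric algebras commute with base change and that the direct-sum decomposition of Lemma \ref{l1} is intrinsic. Note also that $C$ need not be a polynomial algebra, since $\Omega_B(A)$ is merely projective (of rank $n-m$) over $A$; this is why the statement of Proposition \ref{p1} asserts only a tensor-product decomposition rather than stable polynomiality, exactly in contrast with Theorem \ref{our_res_cor} where the stably-free hypothesis on $\Omega_R(A)$ forces $C$ to be a polynomial algebra over $R^{[\ell]}$.
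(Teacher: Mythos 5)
Your proposal is correct and follows essentially the same route as the paper's own proof: a uniform choice of $\ell$ for the two Asanuma identities, the splitting $\Omega_R(A)=\Omega_B(A)\oplus(\Omega_R(B)\otimes_B A)$ from Lemma \ref{l1}, and the identity $\mathrm{Sym}(M\oplus N)\cong \mathrm{Sym}(M)\otimes\mathrm{Sym}(N)$ to produce $C=\mathrm{Sym}_{R^{[\ell]}}(\Omega_B(A)\otimes_A R^{[\ell]})$. Your closing remark contrasting this with Theorem \ref{our_res_cor} is also accurate.
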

\begin{proof}
 Since $A$ and $B$ are affine fibrations over $R$, by Theorem \ref{asa_struct-fib-th}, we can choose a sufficiently large positive integer $\ell$
such that 
$$
A^{[\ell]} = \mbox{Sym}_{R^{[\ell]}} (\Omega_R(A) \otimes_A R^{[\ell]}) \text{~~and~~}
B^{[\ell]} = \mbox{Sym}_{R^{[\ell]}} (\Omega_R(B) \otimes_B R^{[\ell]}).
$$
Therefore,  by Lemma \ref{l1}, we have 
\begin{eqnarray*}
 A^{[\ell]} &=& \mbox{Sym}_{R^{[\ell]}} (\Omega_R(A) \otimes_A R^{[\ell]})\\
&=& \mbox{Sym}_{R^{[\ell]}} ((\Omega_{B}(A) \oplus (\Omega_R(B)\otimes_{B} A)) \otimes_A R^{[\ell]})\\
&=& \mbox{Sym}_{R^{[\ell]}} (\Omega_{B}(A) \otimes_A R^{[\ell]})\otimes_{R^{[\ell]}} 
\mbox{Sym}_{R^{[\ell]}} ((\Omega_R(B)\otimes_{B} A)\otimes_A R^{[\ell]})\\
&=& \mbox{Sym}_{R^{[\ell]}} (\Omega_{B}(A) \otimes_A R^{[\ell]})\otimes_{R^{[\ell]}} 
\mbox{Sym}_{R^{[\ell]}} (\Omega_R(B)\otimes_{B} R^{[\ell]})\\
&=& \mbox{Sym}_{R^{[\ell]}} (\Omega_{B}(A) \otimes_A R^{[\ell]})\otimes_{R^{[\ell]}} B^{[\ell]}\\
&=& C \otimes_{R^{[\ell]}} B^{[\ell]},
\end{eqnarray*}
where $C= \mbox{Sym}_{R^{[\ell]}} (\Omega_{B}(A) \otimes_A R^{[\ell]})$. 
\end{proof}

\smallskip

\noindent
{\bf Acknowledgements:} The authors thank Neena Gupta for carefully going through the draft and the referee for suggestions which have been incorporated in the Appendix.

\bibliography{reference}
\bibliographystyle{amsalpha} 
\end{document}